\author{}
\newtheorem{theorem}{Theorem}[section]
\newtheorem{proposition}[theorem]{Proposition}
\newtheorem{lemma}[theorem]{Lemma}
\newtheorem{remark}[theorem]{Remark}
\theoremstyle{definition}
\newtheorem*{definition*}{Definition}
\newtheorem*{proposition*}{Proposition}
\newtheorem*{corollary*}{Corollary}
\newtheorem*{lemma*}{Lemma}
\newtheorem*{remark*}{Remark}
\newcommand{\cX}{\mathcal X}
\newcommand{\cL}{\mathcal L}
\newcommand{\fq}{\mathbb {F}_q}
\newcommand{\fqq}{\mathbb {F}_{q^2}}
\newcommand{\fqs}{\mathbb {F}_{q^6}}
\newcommand{\N}{\mathbb {N}}
\def\cD{\mathcal D}
\def\cG{\mathcal G}
\def\cX{\mathcal X}
\def\cL{\mathscr L}
\title{Multi-Point AG Codes on the GK Maximal Curves}
\author{D. Bartoli, M. Montanucci, and G. Zini}
\date{}
\begin{document}

\maketitle 

\begin{abstract}
In this paper we investigate multi-point Algebraic-Geometric codes associated to the GK maximal curve, starting from a divisor which is invariant under a large automorphism group of the curve.
We construct families of codes with large automorphism groups.
\end{abstract}

\section{Introduction}
Let $\mathcal{X}$ be an algebraic curve defined over the finite field $\mathbb{F}_q$ of order $q$. 
An Algebraic-Geometric code (AG for short) is a linear error correcting code constructed from $\mathcal{X}$; see \cite{Goppa1,Goppa2}. 
The parameters of the AG code strictly depend on some characteristics of the underlying curve $\mathcal{X}$. In general, curves with many $\mathbb{F}_q$-rational places with respect to their genus give rise to AG codes with good parameters. For this reason maximal curves, that is curves attaining the Hasse-Weil upper bound, have been widely investigated in the literature: for example the Hermitian curve and its quotients, the Suzuki curve, and the Klein quartic; see for instance \cite{Hansen1987,Matthews2004,Matthews2005,Stichtenoth1988,Tiersma1987,XC2002,XL2000,YK1992}. More recently, AG codes were obtained from the GK curves \cite{GK}, which are the first example of maximal curves shown not to be covered by the Hermitian curve; see \cite{FG2010,CT2016}. 

In this work we investigate multi-point AG codes on the GK curves having a large automorphism group.
Note that codes with large automorphism groups can have good performance in encoding \cite{HLS1995} and decoding  \cite{Joyner}.

Most of the AG codes described in the literature are one-point or two-point. A natural way to get AG codes with large automorphism groups is to construct them from a divisor of $\cX$ which is invariant under an automorphism group of $\cX$; see for instance \cite{JK2006,EHKP2016,KS2016}.

The main result of the paper is the construction of some families of $[n,k,d]_{q^6}$-codes as AG codes on the GK curve $\cX$ of genus $g=\frac{q^5-2q^3+q^2}{2}$. The results are summarized in the Table \ref{tabellina}; they depend on non-negative integers $m$, $s$, and $r:=\gcd\left(s,\frac{q^2-q+1}{\gcd(3,q+1)}\right)$.

\begin{table}\caption{Parameters of the constructed codes}\label{tabellina}
\tabcolsep = 0.5 mm
{\small
\def\arraystretch{1.5}
\begin{tabular}{|c|c|c|c|c|c|}
\hline
Code & $n$ & $d$ & $m$ & $k$ & Automorphism group\\
\hline\hline
$C$&\multirow{2}{*}{$q^8\!-q^6\!+\!q^5\!-\!q^3$}&\multirow{2}{*}{$d^*$}&$[q^2-1,q^5-q^3-1]$&$m (q^3\!+\!1)\!+\!1-\!g$&\\
\cline{4-6}
(Sect. \!\ref{MainSection})& & &$[2,q^2-1]$&$\leq m (q^3\!+\!1)\!+\!1-\!g$&$({\rm Aut}(\cX)\!\!\rtimes\!\!{\rm Aut}(\fqs))\!\!\rtimes\! \mathbb F_{q^6}^*$\\
\hline
&$q^8-q^6+q^5$&\multirow{4}{*}{$\geq d^*$}&$\Big[\frac{q^5-2q^3+q^2-1}{(s+1)q^3+1},$&$m(s+1)q^3+m$&\\
$\bar C$&$-(s+1)q^3,\hspace{.3 cm}$&&$\frac{q^8-q^6+q^5-(s+1)q^3-1}{(s+1)q^3+1}\Big]$&$+1-g\hspace{1.5 cm}$&\\
\cline{4-6}
(Sect. \!\!\ref{First})&with $s>0$&&\multirow{2}{*}{$\Big[2,\frac{q^8-q^6+q^5-(s+1)q^3}{(s+1)q^3(q^3+1)}\Big]$}&$ \leq m(s+1)q^3+m$&$\big(({\rm SU}(3,q)\times C_r)$\\
&&&&$+1-g\hspace{1.1 cm}$&\hspace{.4 cm}$\rtimes{\rm Aut}(\fqs)\big)\rtimes  \mathbb F_{q^6}^*$\\
\hline
&&\multirow{8}{*}{$d^*$}&$\Big[\frac{q^5-2q^3+q^2-1}{(s+1)q^3},\hspace{.2 cm}$&$m(s+1)q^3$&\\
&&&$\hspace{.2 cm}\frac{q^5-q^3+q^2}{s+1}-1\Big]$&$+1-g\hspace{.6 cm}$&\\
\cline{4-6}
&&&\multirow{6}{*}{$\Big[2,\frac{q^5-q^3+q^2-(s+1)}{(s+1)(q^3+1)}\Big]$}&&$\big(((Q_{q^3}\!\!\rtimes\!\! H_{q^2-1})\!\!\times\!\! C_{q^2-q+1})$\\
$\tilde C$&$q^8-q^6+q^5$&&&&$\rtimes{\rm Aut}(\fqs)\big)\!\rtimes \! \mathbb F_{q^6}^*,\hspace{.8 cm}$\\
(Sect. \!\!\ref{Second})&$-(s+1)q^3+1$&&&$ \leq m(s+1)q^3$&if $s=0$\\
\cline{6-6}
&&&&$+1-g\hspace{.3 cm}$&$\big(((Q_{q^3}\!\!\rtimes\!\! H_{q^2-1})\!\!\times\!\! C_{r})$\\
&&&&&$\rtimes{\rm Aut}(\fqs)\big)\!\rtimes \! \mathbb F_{q^6}^*,$\\
&&&&&if $s>0$ and $p\nmid m$\\
\hline
\end{tabular}
}
\end{table}

In Section \ref{Background} we introduce basic notions and preliminary results concerning AG codes and GK curves. Sections \ref{MainSection} and \ref{SimilarConstructions} contain the main achievements of this paper.

\section{Background and preliminary results}\label{Background}

\subsection{Algebraic Geometric codes}
In this section we introduce some basic notions on AG codes. For a more detailed introduction on this topic we refer to \cite{Sti}.

Let $\mathcal{X}$  be a projective curve over $\mathbb{F}_q$, and consider the field of rational functions $\mathbb{F}_q(\mathcal X)$ on $\mathcal X$. Let $\mathcal{X}(\fq)$ be the set of all the $\fq$- rational places of $\mathcal{X}$. Given an $\fq$-rational divisor $D=\sum_{P \in \mathcal{X}(\fq)}n_P P$ on $\mathcal{X}$, the Riemann-Roch space $\mathcal L(D)$ is a finite dimensional $\mathbb F_q$-vector space given by
$$\cL(D) := \left\{ f \in \mathbb{F}_q(\mathcal X)\setminus \{0\} \mid (f)+D\geq 0\right\}\cup \{0\},$$
where $(f)$ indicates the principal divisor of $f$. 

Let $D=P_1+\cdots+P_n$, with $P_i\neq P_j$ for $i\ne j$, be an $\fq$-rational divisor where each $P_i$ has weight one in $D$. Let $G$ be another $\fq$-rational divisor of $\mathbb{F}_q(\mathcal X)$ such that ${\rm supp}(D) \cap {\rm supp}(G) =\emptyset$. The \emph{functional code} $C_\cL(D,G)$ is defined as follows.
Consider the evaluation map 
$$\begin{array}{llll}
e_D : 	& \cL (G) &\to &\mathbb{F}_q^n\\
		& f			& \mapsto & e_D(f)=(f(P_1),f(P_2),\ldots, f(P_n))\\
\end{array}.
$$	
The map $e_D$ is $\mathbb{F}_q$-linear and injective if $n> \deg(G)$. We define $C_\cL(D,G)=e_D(\cL(G))$, an $[n,k,d]_q$ code with $k=\ell(G)-\ell(G-D)$ and $d\geq d^*= n-\deg(G)$. The integer $d^*$ is called \emph{designed minimum distance}. If $\deg(G)>2g-2$, where $g$ is the genus of the curve $\mathcal{X}$, then $k=\deg(G)+1-g$. The \emph{differential code} $C_{\Omega}(D,G)$ is defined as
$$C_{\Omega}(D,G)= \left\{ (res_{P_1}(\omega),res_{P_2}(\omega),\ldots, res_{P_n}(\omega) \mid \omega \in \Omega(G-D)\right\},$$
where  $\Omega(G-D)= \{\omega \in \Omega(\mathcal X) \mid (\omega) \geq G-D\} \cup \{0\}.$ The differential code $C_{\Omega}(D,G)$ has dimension $n-\deg(G)+g-1$ and minimum distance at least $\deg(G)-2g+2$.

Now we define the automorphism group of $C_\cL(D,G)$; see \cite{GK2,JK2006}.
Let $\mathcal{M}_{n,q}\leq{\rm GL}(n,q)$ be the subgroup of matrices having exactly one non-zero element in each row and column.
For $\gamma\in{\rm Aut}(\fq)$ and $M=(m_{i,j})_{i,j}\in{\rm GL}(n,q)$, let $M^\gamma$ be the matrix $(\gamma(m_{i,j}))_{i,j}$.
Let $\mathcal{W}_{n,q}$ be the semidirect product $\mathcal M_{n,q}\rtimes{\rm Aut}(\fq)$ with multiplication $M_1\gamma_1\cdot M_2\gamma_2:= M_1M_2^\gamma\cdot\gamma_1\gamma_2$.
The \emph{automorphism group} ${\rm Aut}(C_\cL(D,G))$ of $C_\cL(D,G)$ is the subgroup of $\mathcal{W}_{n,q}$ preserving $C_\cL(D,G)$, that is,
$$ M\gamma(x_1,\ldots,x_n):=((x_1,\ldots,x_n)\cdot M)^\gamma \in C_\cL(D,G) \;\;\textrm{for any}\;\; (x_1,\ldots,x_n)\in C_\cL(D,G). $$
Let ${\rm Aut}_{\fq}(\cX)$ be the $\fq$-automorphism group of $\cX$,
$$ {\rm Aut}_{\fq,D,G}(\cX):=\{ \sigma\in{\rm Aut}_{\fq}(\cX)\,\mid\, \sigma(D)=D,\,\sigma(G)\approx_D G \}, $$
where $G'\approx_D G$ if and only if there exists $u\in\fq(\cX)$ such that $G'-G=(u)$ and $u(P_i)=1$ for $i=1,\ldots,n$, and
$$ {\rm Aut}_{\fq,D,G}^+(\cX):=\{ \sigma\in{\rm Aut}_{\fq}(\cX)\,\mid\, \sigma(D)=D,\,\sigma(|G|)=|G| \}, $$
where $|G|=\{G+(f)\mid f\in\overline{\mathbb F}_q(\cX)\}$ is the linear series associated with $G$.
Note that ${\rm Aut}_{\fq,D,G}(\cX)\subseteq {\rm Aut}_{\fq,D,G}^+(\cX)$. 

\begin{remark}\label{Coincidono}
Suppose that ${\rm supp}(D)\cup{\rm supp}(G)=\cX(\fq)$ and each place in ${\rm supp}(G)$ has the same weight in $G$. Then
$$ {\rm Aut}_{\fq,D,G}(\cX) = {\rm Aut}_{\fq,D,G}^+(\cX) = \{\sigma\in{\rm Aut}_{\fq}(\cX)\,\mid\,\sigma({\rm supp}(G))={\rm supp}(G) \}. $$
\end{remark}

In \cite{GK2} the following result was proved.
\begin{theorem}{\rm(\!\!\cite[Theorem 3.4]{GK2})}\label{Aut}
Suppose that the following conditions hold:
\begin{itemize}
\item $G$ is effective;
\item $\ell(G-P)=\ell(G)-1$ and $\ell(G-P-Q)=\ell(G)-2$ for any $P,Q\in\cX$;
\item $\cX$ has a plane model $\Pi(\cX)$ with coordinate functions $x,y\in\cL(G)$;
\item $\cX$ is defined over $\mathbb F_p$;
\item the support of $D$ is preserved by the Frobenius morphism $(x,y)\mapsto(x^p,y^p)$;
\item $n>\deg(G)\cdot\deg(\Pi(\cX))$.
\end{itemize}
Then
$$ {\rm Aut}(C_\cL(D,G))\cong ({\rm Aut}_{\fq,D,G}^+(\cX)\rtimes{\rm Aut}(\fq))\rtimes \mathbb{F}_q^*. $$
\end{theorem}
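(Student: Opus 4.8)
The plan is to establish the isomorphism by two inclusions: first exhibiting $({\rm Aut}_{\fq,D,G}^+(\cX)\rtimes{\rm Aut}(\fq))\rtimes \mathbb{F}_q^*$ as a subgroup of ${\rm Aut}(C_\cL(D,G))$, and then proving that every code automorphism already lies in this subgroup. For the first inclusion I would fix a basis $f_1,\dots,f_k$ of $\cL(G)$ (with $f_1=1$, $f_2=x$, $f_3=y$, which is legitimate since $G$ is effective), take the generator matrix whose rows are $e_D(f_1),\dots,e_D(f_k)$, and check that each of the three factors acts as a code automorphism. A scalar $\lambda\in\mathbb{F}_q^*$ acts as $\lambda I$ and trivially stabilises $C_\cL(D,G)$. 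A field automorphism $\gamma\in{\rm Aut}(\fq)$ is a power of the Frobenius $(x,y)\mapsto(x^p,y^p)$; since $\cX$ is defined over $\mathbb{F}_p$ and ${\rm supp}(D)$ is Frobenius-invariant, $\gamma$ permutes the $P_i$ and stabilises $\cL(G)$, giving an element of $\mathcal{W}_{n,q}$ preserving $C_\cL(D,G)$. Finally, for $\sigma\in{\rm Aut}_{\fq,D,G}^+(\cX)$ the condition $\sigma(D)=D$ makes $\sigma$ permute the $P_i$, while $\sigma(|G|)=|G|$ gives $\sigma(G)=G+(u)$ for some $u$, so $f\mapsto u\cdot(f\circ\sigma^{-1})$ is a linear automorphism of $\cL(G)$; the induced transformation of $\fq^n$ permutes coordinates and rescales them by the $u(P_i)$, hence is monomial and preserves $C_\cL(D,G)$. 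Because $u$ is determined only up to a multiplicative constant, the monomial matrix attached to $\sigma$ is well defined modulo the scalar subgroup, which is exactly the role of the $\rtimes\mathbb{F}_q^*$ factor; a short check of the commutation relations then yields the asserted semidirect product as a subgroup.

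The substance is the reverse inclusion. The hypotheses $\ell(G-P)=\ell(G)-1$ and $\ell(G-P-Q)=\ell(G)-2$ for all $P,Q$ say precisely that $|G|$ is base-point-free and separates points and tangents, so $\phi_G:\cX\to\bP^{k-1}$, $P\mapsto(f_1(P):\dots:f_k(P))$, is a projective embedding by the complete linear series $|G|$, and the $i$-th column of the generator matrix is a homogeneous representative of the point $\phi_G(P_i)$. I would take an arbitrary $\tau\in{\rm Aut}(C_\cL(D,G))$ and write $\tau=M\gamma$ with $M\in\mathcal{M}_{n,q}$ and $\gamma\in{\rm Aut}(\fq)$. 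Since the Frobenius is already a code automorphism by the first paragraph, I may post-compose with a suitable field automorphism and reduce to the case $\gamma=1$, i.e. to a monomial matrix $M$ preserving $C_\cL(D,G)$. Right multiplication by $M$ permutes and rescales coordinate positions, so it induces an $\fq$-linear automorphism of the $k$-dimensional code, equivalently an $A\in{\rm GL}(k,\fq)$ acting on the row space. Comparing columns then gives a permutation $\pi\in S_n$ and nonzero scalars with $A\cdot\phi_G(P_i)=\phi_G(P_{\pi(i)})$ in homogeneous coordinates for every $i$; hence the collineation $[A]$ of $\bP^{k-1}$ maps the point set $\{\phi_G(P_i)\}$ to itself.

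The crux is to upgrade this into the equality $[A]\bigl(\phi_G(\cX)\bigr)=\phi_G(\cX)$. Both $\phi_G(\cX)$ and the curve $[A]\bigl(\phi_G(\cX)\bigr)$ pass through all $n$ points $\phi_G(P_i)$. Projecting birationally onto the plane model $\Pi(\cX)$ through the coordinates $(1:x:y)$ realises these as two plane curves whose degrees are controlled by $\deg(\Pi(\cX))$ and by the degree $\deg(G)$ of the hyperplane sections; a Bezout-type estimate then bounds the number of points two such distinct curves can share by $\deg(G)\cdot\deg(\Pi(\cX))$. Since they already share the $n>\deg(G)\cdot\deg(\Pi(\cX))$ common points $\phi_G(P_i)$, the two curves must coincide, so $[A]$ stabilises $\phi_G(\cX)$. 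I expect this intersection-theoretic step to be the main obstacle: one must set up the comparison between $\phi_G(\cX)$ and its image correctly, track carefully the respective roles of the complete embedding of degree $\deg(G)$ and of the plane model of degree $\deg(\Pi(\cX))$, and make sure that the count of forced common points is genuinely governed by the product appearing in the hypothesis.

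Once $[A]$ is known to preserve the embedded curve, standard facts about complete linear series show that it is induced by an automorphism $\sigma\in{\rm Aut}_{\fq}(\cX)$; by construction $\sigma$ fixes the hyperplane series, so $\sigma(|G|)=|G|$, and it permutes ${\rm supp}(D)$, so $\sigma(D)=D$, whence $\sigma\in{\rm Aut}_{\fq,D,G}^+(\cX)$. Re-attaching the Frobenius power factored out earlier and the global scalar absorbed in the passage to $[A]$ exhibits $\tau$ as the product of an element of ${\rm Aut}_{\fq,D,G}^+(\cX)$, an element of ${\rm Aut}(\fq)$, and a scalar of $\mathbb{F}_q^*$. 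This proves that ${\rm Aut}(C_\cL(D,G))$ is contained in the subgroup constructed in the first paragraph, and together with that inclusion it gives the claimed isomorphism.
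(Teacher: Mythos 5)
A preliminary remark: the paper does not prove Theorem \ref{Aut} at all; it is quoted from \cite[Theorem 3.4]{GK2}, so your attempt can only be compared with the original argument there. Your overall architecture does agree with it: one shows the three factors act as code automorphisms, factors out a power of Frobenius, uses injectivity of $e_D$ (guaranteed since $n>\deg(G)\cdot\deg(\Pi(\cX))\geq\deg(G)$) to convert a code-preserving monomial matrix $M$ into a linear bijection $T$ of $\cL(G)$, and lets the hypothesis $n>\deg(G)\cdot\deg(\Pi(\cX))$ force an automorphism of $\cX$. The difficulty is that the step you yourself flag as the crux is not carried out, and the implementation you sketch would fail. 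B\'ezout bounds intersections of \emph{plane} curves, while $\phi_G(\cX)$ and $[A](\phi_G(\cX))$ sit in $\bP^{k-1}$; after projecting both to the plane of the coordinates $(1:x:y)$, the count $n>\deg(G)\cdot\deg(\Pi(\cX))$ at best forces the two plane projections to share the irreducible curve $\Pi(\cX)$ as a component, and equality of the two space curves does not follow: distinct curves in $\bP^{k-1}$ can have the same plane projection, which is exactly the information the projection destroys. So ``the two curves must coincide'' is a genuine gap, not a routine verification.

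The argument that works (and is the one in \cite{GK2}) is function-theoretic and never intersects two curves. From $M$ one gets $T\in{\rm GL}(\cL(G))$ with $T(f)(P_i)=c_i\,f(P_{\pi(i)})$ for a permutation $\pi$ and nonzero scalars $c_i$. Put $x':=T(x)/T(1)$ and $y':=T(y)/T(1)$ (here $1\in\cL(G)$ uses effectivity of $G$, and $T(1)(P_i)=c_i\neq0$ while $P_i\notin{\rm supp}(G)$), so that $x'(P_i)=x(P_{\pi(i)})$ and $y'(P_i)=y(P_{\pi(i)})$. If $F=0$ is the plane model and $d=\deg(\Pi(\cX))$, then $F(x',y')\cdot T(1)^{d}\in\cL(dG)$, hence this function is either $0$ or has at most $d\cdot\deg(G)$ zeros; it vanishes at the $n$ points $P_i$, so it is identically zero. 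Thus $x\mapsto x'$, $y\mapsto y'$ is an endomorphism of $\fq(\cX)=\fq(x,y)$, inverted by the same construction applied to $M^{-1}$; this produces $\sigma\in{\rm Aut}_{\fq}(\cX)$ with $\sigma(P_i)=P_{\pi(i)}$ and $f\circ\sigma=T(f)/T(1)$ for all $f\in\cL(G)$, from which $\sigma(D)=D$ and $\sigma(|G|)=|G|$ follow --- note this route also hands you rationality of $\sigma$ over $\fq$ for free, a point your appeal to ``standard facts about complete linear series'' leaves unaddressed. Two smaller holes: your Frobenius reduction claims $\gamma$ ``stabilises $\cL(G)$'', which silently uses Frobenius-invariance of $G$ --- not among the listed hypotheses (only ${\rm supp}(D)$ is assumed invariant), so it must be added or verified (it does hold in this paper's applications, where ${\rm supp}(G)$ is Frobenius-stable with constant weights); and the exactness of the semidirect-product decomposition (pairwise trivial intersections and the stated multiplication rule in $\mathcal{W}_{n,q}$) is asserted rather than checked, though it becomes mechanical once the relation $f\circ\sigma=T(f)/T(1)$ is in hand.
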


In the construction of AG codes, the condition ${\rm supp}(D) \cap {\rm supp}(G)=\emptyset$ can be removed as follows; see \cite[Sec. 3.1.1]{TV}.
Let $P_1,\ldots,P_n$ be distinct $\fq$-rational places of $\cX$ and $D=P_1+\ldots +P_n$, $G=\sum n_P P$ be $\fq$-rational divisors of $\cX$.
For any $P_i$ let $b_i=n_{P_i}$ and $t_i$ be a local parameter at $P_i$. The map
$$\begin{array}{llll}
e^{\prime}_{D} : 	& \cL (G) &\to &\mathbb{F}_q^n\\
		& f			& \mapsto & e^\prime_{D}(f)=((t^{b_1}f)(P_1),(t^{b_2}f)(P_2),\ldots, (t^{b_n}f)(P_n))\\
\end{array}
$$	
is linear. We define the \emph{extended AG code} $C_{ext}(D,G):=e^{\prime}(\cL(G))$.
Note that $e^\prime_D$ is not well-defined since it depends on the choise of the local parameters; yet, different choices yield extended AG codes which are equivalent.
The code $C_{ext}$ is a lengthening of $C_{\cL}(\hat D,G)$, where $\hat D = \sum_{P_i\,:\,n_{P_i}=0}P_i$.
The extended code $C_{ext}$ is an $[n,k,d]_q$-code for which the following properties still hold:
\begin{itemize}
\item $d\geq d^*:=n-\deg(G)$.
\item $k=\ell(G)-\ell(G-D)$.
\item If $n>\deg(G)$, then $k=\ell(G)$; if $n>\deg(G)>2g-2$, then $k=\deg(G)+1-g$.
\end{itemize}

\subsection{The Giulietti-Korchm\'aros curve}
Let $q$ be a prime power. The GK curve $\cX$ over $\mathbb{F}_{q^6}$ is a non-singular curve of $PG(3,\overline{\mathbb{F}}_q)$ defined by the affine equations

\begin{equation}
\left\{
\begin{array}{l}
Y^{q+1}=X^q+X\\
Z^{q^2-q+1}=Y^{q^2}-Y\\
\end{array}
\right..
\end{equation}
This curve has genus $g=\frac{(n^3+1)(n^2-2)}{2}+1$, $q^8-q^6+q^5+1$ $\mathbb{F}_{q^6}$-rational points, and a unique point at infinity $P_{\infty}$, which is $\fqq$-rational. The curve $\cX$ has been introduced in \cite{GK}, where it was proved that $\cX$ is maximal over $\fqs$, that is, the number $|\cX(\fqs)|$ of $\fqs$-rational points of $\cX$ equals $q^6+1+2gq^3$.
Also, for $q>2$, $\cX$ is not $\fqs$-covered by the Hermitian curve maximal over $\fqs$; $\cX$ was the first maximal curve shown to have this property.

The automorphism group ${\rm Aut}(\cX)$ of $\cX$ is defined over $\fqs$ and has size $q^3(q^3+1)(q^2-1)(q^2-q+1)$.  In particular, it has a normal subgroup isomorphic to ${\rm SU(3,q)}$. If $(3,q+1)=1$, then ${\rm Aut}(\cX)\cong{\rm SU}(3,q)\times C_{q^2-q+1}$, where $C_{q^2-q+1}$ is a cyclic group of order $q^2-q+1$. If $(3,q+1)=3$, then ${\rm SU}(3,q)\times C_{(q^2-q+1)/3}$ is isomorphic to a normal subgroup of ${\rm Aut}(\cX)$ of index $3$; see \cite[Theorem 6]{GK}. 
The set $\cX(\fqs)$ of the $\mathbb{F}_{q^6}$-rational points of $\cX$ splits into two orbits under the action of ${\rm Aut}(\cX)$:
one orbit $\mathcal O_1=\cX(\fqq)$ of size $q^3+1$, which coincides with the intersection between $\cX$ and the plane $Z=0$; the other orbit $\mathcal O_2=\cX(\fqs)\setminus\cX(\fqq)$ of size $q^3(q^3+1)(q^2-1)$; see \cite[Theorem 7]{GK}.


By the Orbit-Stabilizer Theorem, the stabilizer ${\rm Aut}(\cX)_{P_\infty}$ of $P_\infty$ in ${\rm Aut(\cX)}$ has order $q^3(q^2-1)(q^2-q+1)$. By direct checking, ${\rm Aut}(\cX)_{P_\infty}< {\rm SU}(3,q) \times  C_{(q^2-q+1)/ \delta}$ where $\delta=(3,q+1)$, and ${\rm Aut}(\cX)_{P_\infty}$ contains a subgroup $(Q_{q^3} \rtimes H_{q^2-1}) \times C_{(q^2-q+1)/ \delta}$ of index $\delta$. Here, $Q_{q^3}$ is a Sylow $p$-subgroup of ${\rm Aut}(\cX)$ and $H_{q^2-1}$ is a cyclic group of order $q^2-1$; see Lemma $8$ and the subsequent discussion in \cite{GK}. 

Let $x,y,z\in \mathbb{F}_{q^6}(\cX)$ be the coordinate functions of the function field of $\cX$, which satisfy $y^{q+1}=x^q+x$ and $z^{q^2-q+1}=y^{q^2}-y$. Also, denote by $P_0$ and $P_{(a,b,c)}$ the affine points $(0,0,0)$ and $(a,b,c)$ respectively. Then it is not difficult to prove that 
\begin{itemize}
\item $(x)=(q^3+1)P_0-(q^3+1)P_{\infty}$;
\item $(y)=(q^2-q+1)\big(\sum_{a\,:\,a^q+a=0} P_{(a,0,0)}\big)-(q^3-q^2+q)P_{\infty}$;
\item $(z)=\big(\sum_{P\in\cX(\fqq),P\ne P_\infty} P\big)-q^3P_{\infty}$.
\end{itemize}

\section{AG codes on the GK curves}\label{MainSection}

Let $m\in\N$ and consider the sets
$$ \cG:=\cX(\fqq)\,,\quad \cD:=\cX(\fqs)\setminus\cG. $$
Note that $\cG$ is the intersection of $\cX$ with the plane $Z=0$. Define the $\fqs$-divisors
$$ G:=\sum_{P\in\cG}m P\quad{\rm and}\quad D:=\sum_{P\in\cD} P\,, $$
which have degree $\,m (q^3+1)\,$ and $\,q^8-q^6+q^5-q^3\,$, respectively.
Denote by $C:=C_{\cL}(D,G)$ the associated functional AG code over $\fqs$ having length $n=q^8-q^6+q^5-q^3$, dimension $k$, and minimum distance $d$.
The designed minimum distance of $C$ is
$$ d^* = n - \deg G = q^8 - q^6 + q^5 - q^3  - m (q^3+1).$$
\begin{lemma} \label{partizione}
There exist exactly $q^5-q^3$ planes $\pi_a:X=a$, $a\in\fqs$, containing $q^3+1$ distinct $\mathbb{F}_{q^6}$-rational points of $\mathcal{X}$. Their affine points give rise to a partition of $\cX(\fqs)\setminus\cX(\fqq)$.
\end{lemma}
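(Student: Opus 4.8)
The plan is to analyse the fibres of the degree-$(q^3+1)$ projection $x\colon\cX\to\bP^1$ and show that over the relevant values of $a$ they split completely into $\fqs$-rational points lying in $\cD$. Since $(x)=(q^3+1)P_0-(q^3+1)P_\infty$, the function $x$ has its only pole at $P_\infty$, so for every finite $a$ the fibre $x^{-1}(a)$ consists of $q^3+1$ affine points counted with multiplicity; thus the planes $\pi_a$ with $a\neq 0$ are exactly the affine fibres of $x$. I would factor $x$ through the Hermitian curve $\cH\colon y^{q+1}=x^q+x$ via $(x,y,z)\mapsto(x,y)\mapsto x$, the first map having degree $q+1$ and the second degree $q^2-q+1$, matching $(q+1)(q^2-q+1)=q^3+1$; the ramification of $x$ is thus concentrated where $y^q+y=0$ or where $y\in\fqq$ (equivalently $z=0$).

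First I would fix $P=P_{(a,b,c)}\in\cD$, so that $a,b,c\in\fqs$ with $c\neq 0$; then $b\notin\fqq$ (otherwise $b^{q^2}-b=0=c^{q^2-q+1}$ would force $c=0$) and in particular $b\neq 0$. Setting $\gamma:=a^q+a=b^{q+1}$, the element $\gamma$ is a nonzero $(q+1)$-th power in $\fqs^*$, so $y^{q+1}=\gamma$ has exactly $q+1$ roots in $\fqs$, namely the $b\zeta$ with $\zeta^{q+1}=1$; all the $(q+1)$-th roots of unity lie in $\fqq\subseteq\fqs$ because $q+1\mid q^2-1$. As $\zeta\in\fqq^*$, each root satisfies $b\zeta\notin\fqq$, and since $\zeta^{q^2}=\zeta$ a direct computation gives $(b\zeta)^{q^2}-(b\zeta)=\zeta(b^{q^2}-b)$.

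The key step is to show that $\delta:=b^{q^2}-b$ and each $\zeta\delta$ are $(q^2-q+1)$-th powers in $\fqs^*$. Since $c^{q^2-q+1}=\delta$ with $c\in\fqs^*$, the element $\delta$ is such a power; and $\zeta$, whose order divides $q+1$, is a $(q^2-q+1)$-th power because $q+1\mid q^2-1$, which divides $\frac{q^6-1}{q^2-q+1}=(q^2-1)(q^2+q+1)$, the order of the group of $(q^2-q+1)$-th powers. Hence each $\zeta\delta$ is a $(q^2-q+1)$-th power, so over each of the $q+1$ values $b\zeta$ there are exactly $q^2-q+1$ distinct $\fqs$-rational values of $z$, all nonzero. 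Therefore $\pi_a$ meets $\cX$ in exactly $(q+1)(q^2-q+1)=q^3+1$ distinct $\fqs$-rational points, all lying in $\cD$. This divisibility/power-residue argument, which forces the fibre over a $\cD$-point to split completely rather than partially, is the main obstacle.

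To finish I would prove the converse and count. If a plane $\pi_a$ contained a point with $y\in\fqq$, i.e.\ a point of $\cG$ with $z=0$, then the second cover would ramify totally there and the fibre would contain strictly fewer than $q^3+1$ distinct points; likewise $\gamma=0$ collapses the fibre onto a single $\cG$-point. Hence every plane meeting $\cX$ in $q^3+1$ distinct $\fqs$-rational points has all of them in $\cD$. Combined with the previous paragraph, the planes with this property are precisely the $\pi_a$ with $a=x(P)$ for $P\in\cD$, each carrying exactly $q^3+1$ points of $\cD$, and the corresponding fibres partition $\cD$. Their number is therefore $|\cD|/(q^3+1)=\big(q^3(q^2-1)(q^3+1)\big)/(q^3+1)=q^5-q^3$, as claimed.
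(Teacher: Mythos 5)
Your proof is correct and follows essentially the same route as the paper's: classify the intersections $\pi_a\cap\cX$ according to whether $a\in\fqq$ or not, show that a plane through a point of $\cD=\cX(\fqs)\setminus\cX(\fqq)$ meets $\cX$ in exactly $q^3+1$ distinct $\fqs$-rational points all lying in $\cD$, and count $|\cD|/(q^3+1)=q^5-q^3$. If anything, your power-residue argument is more careful than the paper's at the key point: the paper simply exhibits the fibre as $\{(a,\xi b,\eta c)\mid \xi^{q+1}=\eta^{q^2-q+1}=1\}$, whose points actually satisfy the curve equations only when $\eta^{q^2-q+1}=\xi$ rather than $1$, and your verification that each $\zeta(b^{q^2}-b)$ is a $(q^2-q+1)$-th power in $\fqs^*$ is exactly the justification of the complete splitting that the paper leaves implicit.
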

\begin{proof}
Let $a\in\fqs\setminus\fqq$ such that $\cX$ contains an $\fqs$-rational point $(a,b,c)$. Then $b,c\ne0$, and $\pi_a\cap\cX$ has exactly $q^3+1$ affine distinct points, namely $\pi_q\cap\cX=\{(a,\xi b,\eta c)\mid \xi^{q+1}=\eta^{q^2-q+1}=1\}$.
Now let $a\in\fqq$. Then $(a,b,c)\in\cX$ if and only if $b,c\in\fqq$ satisfy $b^{q+1}=a^q+a$ and $c=0$. In particular, $\pi_a\cap\cX$ has either $1$ or $q+1$ affine points, according to $a^q+a=0$ or $a^q+a\ne0$, respectively.
Therefore the number of planes $\pi_a$ intersecting $\cX$ in exactly $q^3+1$ $\fqs$-rational points is $|{\rm supp}(D)|/|\pi_a\cap\cX|=\frac{q^8-q^6+q^5-q^3}{q^3+1}=q^5-q^3$.
\end{proof}

Now we show that the designed minimum distance is attained by $C$.
\begin{proposition}\label{MinDis}
Whenever $d^*>0$,  $C$ attains the designed minimum distance $d^*$.
\end{proposition}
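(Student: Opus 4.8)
The plan is to exhibit explicitly a single nonzero codeword of weight exactly $d^*$; since we already know $d\geq d^*$ for every functional code, producing such a word forces $d=d^*$. Recall that for $0\neq f\in\cL(G)$ the weight of $e_D(f)$ equals $n$ minus the number of places of $\cD$ at which $f$ vanishes. Because any $f\in\cL(G)$ satisfies $(f)+G\geq 0$, its pole divisor is bounded by $G$, so its zero divisor has degree at most $\deg G=m(q^3+1)$; hence $f$ vanishes at most $m(q^3+1)$ times. A codeword of weight $d^*=n-m(q^3+1)$ therefore exists precisely when we can find an $f\in\cL(G)$ whose pole divisor equals $G$ and whose $m(q^3+1)$ zeros are all simple and lie in $\cD$.

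First I would fix the zeros. By Lemma \ref{partizione} there are exactly $q^5-q^3$ planes $\pi_a:X=a$ meeting $\cX$ in $q^3+1$ distinct points, all lying in $\cD$, and these point-sets partition $\cD$. Since $d^*=(q^5-q^3-m)(q^3+1)>0$ forces $m\leq q^5-q^3-1$, I can pick $m$ distinct values $a_1,\ldots,a_m\in\fqs$ whose planes $\pi_{a_j}$ each cut out $q^3+1$ points of $\cD$, and set $E:=\sum_{j=1}^m(\pi_{a_j}\cap\cX)$, an effective, multiplicity-free divisor of degree $m(q^3+1)$ supported on $\cD$.

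Next I would write down the candidate function and compute its divisor. Using the stated principal divisor $(z)=\sum_{P\in\cG}P-(q^3+1)P_\infty$ together with $(x-a_j)=(\pi_{a_j}\cap\cX)-(q^3+1)P_\infty$ (which follows from the shape of $(x)$ and Lemma \ref{partizione}), I would take
$$ f:=\frac{\prod_{j=1}^m (x-a_j)}{z^m} $$
and verify that $(f)=E-G$. The pole $m(q^3+1)P_\infty$ of the numerator is exactly cancelled by the zero of order $m(q^3+1)$ contributed by $z^{-m}$ at $P_\infty$, leaving a pole of order only $m$ there; at each remaining place of $\cG$ the factor $z^{-m}$ introduces a pole of order $m$. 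Thus the polar part of $f$ is precisely $G$ and its zero part is precisely $E$. Hence $(f)+G=E\geq 0$, so $f\in\cL(G)$, and $f$ vanishes exactly at the $m(q^3+1)$ distinct places of $E\subset\cD$. Consequently $e_D(f)$ is a nonzero codeword of weight $n-m(q^3+1)=d^*$, which together with $d\geq d^*$ gives $d=d^*$.

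The computation is essentially routine once the divisors of $x$ and $z$ are in hand; the only delicate point is the bookkeeping at $P_\infty$, where the large pole $m(q^3+1)P_\infty$ of the numerator must be seen to drop to exactly $m$ after dividing by $z^m$, so as to match the coefficient of $P_\infty$ in $G$. Conceptually, the heart of the argument is the linear equivalence $\sum_{P\in\cG}P\sim(q^3+1)P_\infty$ furnished by $z$, which is what makes $E\sim G$ and thereby guarantees the existence of $f$; this is the step I would present with the most care.
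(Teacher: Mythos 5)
Your proof is correct and takes essentially the same route as the paper: the paper's proof also chooses $m$ of the planes from Lemma \ref{partizione} and evaluates the very same function $f=\prod_{i=1}^m\frac{x-a_i}{z}$, whose pole divisor is exactly $G$ and whose $m(q^3+1)$ simple zeros lie in ${\rm supp}(D)$, yielding a codeword of weight $d^*$. Your write-up merely makes explicit the divisor bookkeeping (in particular the cancellation at $P_\infty$ and the fact that $d^*>0$ forces $m\leq q^5-q^3-1$, so enough planes exist) that the paper leaves as a one-line check.
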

\begin{proof}
Take $m$ distinct elements $a_1\,\ldots,a_m\in\fqs\setminus\fqq$ such that $|\pi_a\cap\cX|=q^3+1$, and let
\begin{equation}\label{dAttained} f:= \prod_{i=1}^m \left(\frac{x-a_i}{z}\right). \end{equation}
Then the pole divisor of $f$ is $(f)_{\infty} = G$,
thus $f\in\cL(G)$. The weight of $e_D(f)$ is 
$$w(e_D(f)) = n - m(q^3+1)  = d^*.$$
\end{proof}
When $m$ is in a suitable range, the dimension of $C$ can be explicitly computed.
\begin{proposition}\label{Dimension}
If $q^2-1 \leq m \leq q^5-q^3-1$, then $$k=m (q^3 +1)-\frac{1}{2}(q^5-2q^3+q^2-2).$$
\end{proposition}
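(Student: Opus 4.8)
The plan is to apply the Riemann--Roch theorem directly, after observing that the claimed value of $k$ is precisely $\deg G + 1 - g$. Recall from Section~\ref{Background} that the dimension of the functional code is $k = \ell(G) - \ell(G-D)$, and note that, since $g = \frac{q^5 - 2q^3 + q^2}{2}$,
$$ m(q^3+1) - \tfrac{1}{2}(q^5 - 2q^3 + q^2 - 2) = m(q^3+1) + 1 - g. $$
Thus it suffices to establish the two independent facts $\ell(G) = \deg G + 1 - g$ and $\ell(G-D) = 0$ under the stated hypotheses on $m$.

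First I would handle $\ell(G)$ using the lower bound $m \geq q^2 - 1$. This forces
$$ \deg G = m(q^3+1) \geq (q^2-1)(q^3+1) = q^5 - q^3 + q^2 - 1, $$
which exceeds $2g - 2 = q^5 - 2q^3 + q^2 - 2$ by $q^3 + 1 > 0$. Hence $\deg G > 2g-2$, and the Riemann--Roch theorem (as recorded in the background on AG codes) yields $\ell(G) = \deg G + 1 - g = m(q^3+1) + 1 - g$.

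Next I would handle $\ell(G - D)$ using the upper bound $m \leq q^5 - q^3 - 1$, which I expect to force $G - D$ to have negative degree. A direct computation gives
$$ \deg(G-D) = m(q^3+1) - (q^8 - q^6 + q^5 - q^3) \leq (q^5 - q^3 - 1)(q^3+1) - (q^8 - q^6 + q^5 - q^3) = -q^3 - 1 < 0. $$
Since the Riemann--Roch space of a divisor of negative degree is trivial, this gives $\ell(G-D) = 0$.

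Combining the two computations yields $k = \ell(G) - \ell(G-D) = m(q^3+1) + 1 - g$, which is the asserted formula. I do not anticipate any genuine obstacle: the entire content lies in checking that the two inequalities bounding $m$ translate exactly into the conditions $\deg G > 2g - 2$ and $\deg(G-D) < 0$, so the only thing requiring care is the elementary arithmetic of the degree estimates.
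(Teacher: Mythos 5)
Your proposal is correct and is essentially the paper's own proof: the authors also observe that $n > \deg G > 2g-2$ and invoke Riemann--Roch to get $k = \deg G + 1 - g$, leaving the degree arithmetic implicit. You have merely spelled out the two inequality checks (that $m \geq q^2-1$ forces $\deg G > 2g-2$ and $m \leq q^5-q^3-1$ forces $\deg(G-D) < 0$, hence $\ell(G-D)=0$), both of which are verified correctly.
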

\begin{proof}
Since $n>\deg G>2g-2$, then by Riemann-Roch Theorem $k=\deg G+1-g$.
\end{proof}

Let $H$ be the $\fqq$-divisor given by $H:=\sum_{P \in \mathcal{G}}P$, so that $G=mH$.
\begin{proposition}
If $m<q^5-q^3+q^2-2$, then the codes $C_{\Omega}(D,G)$ and $C_{\cL}(D,(q^5-q^3+q^2-m-2)H)$ are monomially equivalent.
\end{proposition}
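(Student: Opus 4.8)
The plan is to reduce the statement to the construction of a single Weil differential on $\cX$ with a prescribed divisor, and then to exhibit that differential explicitly out of the three coordinate functions $x,y,z$.

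First I would invoke the standard description of a differential code. For any Weil differential $\eta$ on $\cX$ having a simple pole at each place of $\mathrm{supp}(D)$, the map $f\mapsto f\eta$ is an isomorphism $\cL(D-G+(\eta))\to\Omega(G-D)$, and since $\eta$ has a simple pole at $P_i$ one gets $\mathrm{res}_{P_i}(f\eta)=\lambda_i\,f(P_i)$ with $\lambda_i:=\mathrm{res}_{P_i}(\eta)\neq0$; see \cite{Sti}. Hence $C_{\Omega}(D,G)$ is the image of $C_\cL(D,D-G+(\eta))$ under the monomial transformation $\mathrm{diag}(\lambda_1,\dots,\lambda_n)$, so the two codes are monomially equivalent. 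Because $G=mH$, the target divisor is $D-G+(\eta)=(\eta)+D-mH$; thus it suffices to produce a differential $\eta$ with $(\eta)=(q^5-q^3+q^2-2)H-D$, for then $D-G+(\eta)=(q^5-q^3+q^2-m-2)H$, which is exactly the divisor in the statement. The hypothesis $m<q^5-q^3+q^2-2$ serves only to keep this coefficient positive, so that the right-hand code is of the asserted form; note that $\mathrm{supp}(H)=\cG$ is disjoint from $\mathrm{supp}(D)=\cD$ in any case.

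The core of the argument is the explicit construction of such an $\eta$. Let $F:=\prod_{a\in S}(x-a)$, where $S\subset\fqs\setminus\fqq$ is the set of the $q^5-q^3$ values singled out in Lemma~\ref{partizione}. Since the planes $\pi_a$, $a\in S$, partition $\cD$ into fibres of $x$ of full size $q^3+1$, each $x-a$ has $q^3+1$ simple zeros and pole divisor $(q^3+1)P_\infty$; summing over $a\in S$ gives $(F)=D-(q^8-q^6+q^5-q^3)P_\infty$. Next I would establish that $(dz)=(2g-2)P_\infty$: differentiating the defining equations yields $dx=-(q+1)(q^2-q+1)\,y^{q}z^{q^2-q}\,dz$, hence $dz=dx/(c\,y^{q}z^{q^2-q})$ with $c\neq0$. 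Away from $\{z=0\}$ the function $x$ is unramified and $y,z$ are units, so $dz$ is a unit there; at the $q^3$ affine places of $\cG$ the function $z$ is a local parameter, so again $dz$ is a unit. Thus $dz$ has neither zeros nor poles at any affine place, and all of its $2g-2$ zeros are concentrated at $P_\infty$.

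Finally I would combine these facts with $(z)=H-(q^3+1)P_\infty$ and set
$$\eta:=\frac{z^{\,q^5-q^3+q^2-2}}{F}\,dz.$$
A direct valuation count then gives $(\eta)=(q^5-q^3+q^2-2)H-D$: at each affine place of $\cG$ only $z^{q^5-q^3+q^2-2}$ contributes, producing order $q^5-q^3+q^2-2$; at each place of $\cD$ only $F$ contributes, producing order $-1$, i.e. a simple pole and hence a nonzero residue; and at $P_\infty$ the three contributions add up to $q^5-q^3+q^2-2$. This is precisely the divisor required, so the first paragraph applies and concludes the proof. I expect the identity $(dz)=(2g-2)P_\infty$, together with the valuation bookkeeping at $P_\infty$, to be the main obstacle: once the divisor of $dz$ is in hand the remaining work is merely assembling the known principal divisors of $x,z$ and $F$ and checking that the resulting differential has simple poles exactly along $D$.
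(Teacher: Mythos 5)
Your proof is correct, and at its computational core it is the same as the paper's: both arguments rest on the two facts that $(dz)$ is the one-point canonical divisor $(2g-2)P_\infty=(q^3+1)(q^2-2)P_\infty$ and that $F=\prod_{a}(x-a)$, taken over the $q^5-q^3$ planes of Lemma~\ref{partizione}, has divisor $D-(q^8-q^6+q^5-q^3)P_\infty$. The packaging, however, differs in two ways. Where the paper invokes Pretzel's identity $C_\Omega(D,G)=C_{\cL}(D,K+D-G)$ for a canonical $K$ and then verifies the linear equivalence $K+D\equiv(q^5-q^3+q^2-2)H$ through an explicit principal divisor involving the tangent-plane functions $\tau_P(x,y)$, you build a single Weil differential $\eta=z^{q^5-q^3+q^2-2}\,dz/F$ with $(\eta)=(q^5-q^3+q^2-2)H-D$ exactly, so that $D-G+(\eta)$ \emph{equals} the target divisor rather than being merely equivalent to it, and the monomial transformation comes out explicitly as $\mathrm{diag}(\mathrm{res}_{P_1}(\eta),\ldots,\mathrm{res}_{P_n}(\eta))$, the residues being nonzero because the poles of $\eta$ along $D$ are simple. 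In fact the two witnesses coincide up to a scalar: since $\big(\prod_{P\in\cG,P\ne P_\infty}\tau_P\big)=(q^3+1)(z)$, the paper's function $f/z^{q^5+q^2-1}$ is a constant multiple of your $F\,z^{-(q^5-q^3+q^2-2)}$, so your construction quietly eliminates the tangent functions altogether. Second, you actually prove the assertion $(dz)=(2g-2)P_\infty$, which the paper dismisses as easily checked: your chain-rule identity $dx=-(q+1)(q^2-q+1)y^qz^{q^2-q}\,dz$ is right, the unramifiedness of $x$ at affine points off $z=0$ holds (such points have $b\notin\fqq$, hence $a^q+a\ne0$, and the Kummer cover $z^{q^2-q+1}=y^{q^2}-y$ is unramified where $y^{q^2}-y\ne0$), and $z$ is a prime element at the $q^3$ affine points of $\cG$, so $dz$ is a unit at every affine place and the degree count forces the claim. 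Net comparison: your route is marginally longer but self-contained on the code-theoretic side (Stichtenoth's residue description instead of Pretzel) and yields the monomial matrix explicitly; the paper's route is shorter at the cost of outsourcing the identity $C_\Omega(D,G)=C_{\cL}(D,K+D-G)$ and the divisor of $dz$.
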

\begin{proof}
From \cite[Chapter 12.17]{Pretzel1998}, $C_{\Omega}(D,G)=C_{\cL}(D,K+D-G)$ for any canonical divisor $K$.
The function $z$ has valuation $1$ at each affine $\fqs$-rational point of $\cX$, hence $z$ is a separating element for $\overline{\mathbb F}_{q^6}(\cX)/\overline{\mathbb F}_{q^6}$ by \cite[Prop. 3.10.2]{Sti}. Then $dz$ is non-zero by \cite[Prop. 4.1.8(c)]{Sti}.
It is easily checked that $(dz)$ is a one-point divisor at $P_\infty$.
Therefore, we may choose $K=(dz)=(q^3+1)(q^2-2)P_\infty$.

It suffices to prove that $K+D-G\equiv (q^5-q^3+q^2-m-2)H$, that is,
$$ K+D \equiv (q^5-q^3+q^2-2)H. $$
Let $\pi_{a_i}$, $i=1,\ldots,q^5-q^3$, be the $q^5-q^3$ planes described in Lemma \ref{partizione}. Consider the function
$$f:=\Big(\prod_{i=1}^{q^5-q^3} (x- a_i) \Big) \Big(\prod_{P\in{\rm supp}(G),P\ne P_\infty} \tau_P(x,y)\Big),$$
where $\tau_P(x,y)\in\fqq[x,y]$ has principal divisor $(\tau_P)=(q^3+1)P-(q^3+1)P_\infty$, that is, $\tau_P(X,Y)$ is the tangent plane to $\cX$ at $P$.
Then
$$ K+D - (q^5-q^3+q^2-2)H =  div\left(\frac{f}{z^{q^5+q^2-1}}\right). $$
Hence the claim follows.
\end{proof}

We determine the automorphism group of $C$. To this aim, we prove a preliminary Lemma.

\begin{lemma}\label{TwoPoints}
Let $m\geq2$. For any $P,Q\in\cX$, $\ell(G-P)=\ell(G)-1$ and $\ell(G-P-Q)=\ell(G)-2$.
\end{lemma}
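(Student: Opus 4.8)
The plan is to verify the two identities by exhibiting, for every choice of $P$ (resp.\ $P,Q$), an explicit function of $\cL(G)$ witnessing that the removal of each point drops the dimension by one. I would lean on three structural facts. First, $G=mH$ is invariant under ${\rm Aut}(\cX)$, since $\cG=\mathcal O_1$ is a single orbit, so any automorphism $\sigma$ carries $\cL(G-A)$ isomorphically onto $\cL(G-\sigma(A))$. Second, ${\rm Aut}(\cX)\supseteq{\rm SU}(3,q)$ acts $2$-transitively on $\cG$, because $\cG=\cX\cap\{Z=0\}$ is the Hermitian curve, on which ${\rm PGU}(3,q)$ realizes the classical $2$-transitive action. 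Third, the functions $(x/z)^k$, whose divisor is $k(q^3+1)P_0-kH$ by the divisors of $x,z$ recorded in Section~\ref{Background}, together with $1,\,1/z,\,x/z,\,y/z\in\cL(H)\subseteq\cL(G)$. Throughout I use the bounds $\ell(G)-1\le\ell(G-P)\le\ell(G)$ and $\ell(G-P)-1\le\ell(G-P-Q)\le\ell(G-P)$, so it suffices to produce, in each case, one function lying in the larger space but not the smaller one.

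For the first identity I split on whether $P$ lies in $\cG=\mathrm{supp}(G)$. If $P\notin\cG$, the constant $1\in\cL(G)$ does not lie in $\cL(G-P)$, since $G-P\not\ge0$ at $P$; this settles every geometric point off $\cG$ with no use of symmetry. If $P\in\cG$, I reduce by transitivity to $P=P_\infty$ and take $f=(x/z)^m$: its divisor $m(q^3+1)P_0-mH$ shows $f\in\cL(G)$ with $v_{P_\infty}(f)=-m$ exactly, hence $f\notin\cL(G-P_\infty)$; transporting by an automorphism sending $P_\infty$ to $P$ gives the claim for all $P\in\cG$.

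For the second identity I treat the pairs by type. When $P\ne Q$ both lie off $\cG$, the functions $1,\,1/z,\,x/z,\,y/z$ already separate them, because on $\{z\ne0\}$ the values of $(1/z,x/z,y/z)$ recover $(x,y,z)$; thus the evaluation map $\cL(G)\to\overline{\mathbb F}_{q^6}^{2}$ at $(P,Q)$ is onto and its kernel $\cL(G-P-Q)$ has codimension $2$. When exactly one point lies in $\cG$, say $P\in\cG$ (the other case being symmetric), the constant $1$ lies in $\cL(G-P)$, as $v_P(G)=m\ge1$, but not in $\cL(G-P-Q)$, since $1(Q)\ne0$. When $P\ne Q$ both lie in $\cG$, I use $2$-transitivity to move the pair to $(P_0,P_\infty)$ and take $(x/z)^m$: its divisor shows it vanishes (to order $mq^3$) at $P_0$, so it lies in $\cL(G-P_0)$, while it retains a pole of order exactly $m$ at $P_\infty$, whence it lies in $\cL(G-P_0)\setminus\cL(G-P_0-P_\infty)$. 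Finally the coincident cases $P=Q$: for $P\in\cG$ I reduce to $P_\infty$ and note that $(x/z)^m$ and $(x/z)^{m-1}$, of pole orders $-m$ and $-(m-1)$ at $P_\infty$, realize the strict chain $\cL(G)\supsetneq\cL(G-P_\infty)\supsetneq\cL(G-2P_\infty)$; for $P\notin\cG$ I subtract from $x/z,\,y/z,\,1/z$ their values at $P$ to obtain functions of $\cL(G-P)$, and argue that not all three differentials $d(x/z),d(y/z),d(1/z)$ vanish at $P$, so one of them has a simple zero there and lies outside $\cL(G-2P)$.

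The main obstacle I anticipate is exactly the coincident affine case $P=Q\notin\cG$: unlike the others it cannot be reached through the automorphism group and is not a pole-number statement at $P_\infty$, so it genuinely requires the smoothness of $\cX$, in the guise of the non-vanishing of one of $d(x/z),d(y/z),d(1/z)$, equivalently the fact that $(1/z,x/z,y/z)$ is an immersion on $\{z\ne0\}$. A secondary point to pin down is the $2$-transitivity of ${\rm Aut}(\cX)$ on $\cG$, which underlies the two cases with both points in $\cG$. I emphasize that for small $m$ the degree $\deg G=m(q^3+1)$ may fall well below $2g$, so the generic criterion ``very ample once $\deg\ge 2g+1$'' is unavailable and the explicit functions above are indispensable.
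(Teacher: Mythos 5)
Your proof is correct, but it takes a genuinely different route from the paper's. The paper proves the lemma by brute force: it splits into nine cases according to the position of $(P,Q)$ relative to ${\rm supp}(G)$ and in each case writes down two explicit functions $f_1,f_2\in\cL(G)$ (e.g.\ $\frac{z-\alpha}{z}$, $\frac{x-\alpha}{z^2}$, $\left(\frac{x-a}{z-c}\right)^m$, $(x/z)^m$, $(x/z)^{m-1}$, with parameters avoiding finitely many values) and verifies directly that no combination $f_1+\lambda f_2$ lies in $\cL(G-P-Q)$; no symmetry is used anywhere. You instead normalize first: since $\cG=\mathcal O_1$ is a single ${\rm Aut}(\cX)$-orbit, $G=mH$ is ${\rm Aut}(\cX)$-invariant, and the induced action on $\cG$ is the classical unital action, which is indeed $2$-transitive --- the fact you flag as needing pinning down does hold even when $3\mid q+1$ and not all of ${\rm PGU}(3,q)$ lifts to ${\rm Aut}(\cX)$, because the image of ${\rm SU}(3,q)$ on the $q^3+1$ unital points is ${\rm PSU}(3,q)$, already $2$-transitive. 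This reduces every case with $P,Q\in\cG$ to $(P_0,P_\infty)$, where your functions $(x/z)^m$, $(x/z)^{m-1}$ coincide with the paper's own choices in its case $P=Q=P_\infty$; your divisor computations ($(x/z)=(q^3+1)P_0-H$, hence $1,1/z,x/z,y/z\in\cL(H)\subseteq\cL(G)$) all check out. Off the support you replace case-by-case function choices with two soft arguments: surjectivity of the two-point evaluation map, using that $(1/z,x/z,y/z)$ separates points of the chart $\{z\neq0\}$, and, for $P=Q\notin\cG$, the non-vanishing of one of $d(1/z),d(x/z),d(1/z)$'s companions at a smooth embedded point. What each approach buys: the paper's is elementary and self-contained, needing no facts about ${\rm Aut}(\cX)$ or the unital, at the cost of nine cases and repeated checks of the combinations $f_1+\lambda f_2$; yours is shorter and more conceptual, and correctly isolates $P=Q\notin\cG$ as the one genuinely local case, with the apt remark that $\deg G=m(q^3+1)$ may fall below $2g$, so no generic very-ampleness criterion is available. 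It is worth noting that the paper's treatment of that same coincident case (via $f_1=\frac{z-c}{z}$) tacitly requires $v_P(z-c)=1$, which is exactly your differential condition $dz(P)\neq0$; this is legitimate because, as the paper computes in the proposition on $C_\Omega(D,G)$, $(dz)=(q^3+1)(q^2-2)P_\infty$ is supported only at $P_\infty$, and you could invoke that same computation to close your case without appealing to smoothness of the space model.
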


\begin{proof}
When $P$ and $Q$ are affine points, we denote their coordinates by $(a,b,c)$ and $(\bar a,\bar b,\bar c)$, respectively.
The condition $\ell(G-P-Q)=\ell(G)-2$ implies $\ell(G-P)=\ell(G)-1$ (see \cite[Lemma 1.4.8]{Sti}), hence we restrict to the condition on two points.
To prove the claim, we provide two $\fqs$-linearly independent functions $f_1,f_2\in\cL(G)$ such that $f_1,f_2\notin\cL(G-P-Q)$ and $f_1+\lambda f_2 \notin \cL(G-P-Q)$ for any $\lambda\in\fqs$.
\begin{itemize}
\item Case $P,Q\notin{\rm supp}(G)$, $P\ne Q$.
If $c\ne\bar c$, choose $f_1=\frac{z-\alpha}{z}$, $f_2=\frac{z-\beta}{z}$ with $\alpha\ne\beta$, $\alpha,\beta\notin\{c,\bar c,0\}$.
If $c=\bar c$, then $a\ne\bar a$; choose $f_1=\frac{x-\alpha}{z^2}$, $f_2=\frac{x-\beta}{z^2}$ with $\alpha\ne\beta$, $\alpha,\beta\notin\{a,\bar a,0\}$.
\item Case $P,Q\notin{\rm supp}(G)$, $P=Q$.
Choose $f_1=\frac{z-c}{z}$, $f_2=\frac{z-\alpha}{z}$ with $\alpha\notin\{c,0\}$.
\item Case $P\in{\rm supp}(G)$, $Q\notin{\rm supp}(G)$, $P\ne P_\infty$.
Choose $f_1=\left(\frac{z-\bar c}{z-c}\right)^m$, $f_2=\left(\frac{z-\alpha}{z-c}\right)^m$ with $\alpha\notin\{\bar c,c\}$.  
\item Case $P=P_\infty$, $Q\notin{\rm supp}(G)$.
Choose $f_1=\left(\frac{x}{z}\right)^m$, $f_2=\left(\frac{x-\bar a}{z}\right)^m$.
\item Case $P,Q\in{\rm supp}(G)\setminus\{P_\infty\}$, $P\ne Q$.
Since $a\ne\bar a$, we can choose $f_1=\left(\frac{x-a}{z-c}\right)^m$, $f_2=\left(\frac{x-\alpha}{z-c}\right)^m$ with $\alpha\ne a$.
\item Case $P=P_\infty$, $Q\in{\rm supp}(G)\setminus\{P_\infty,O\}$.
Choose $f_1=\left(\frac{x}{z-\bar c}\right)^m$, $f_2=\left(\frac{x-\bar a}{z-\bar c}\right)^m$.
\item Case $P=P_\infty$, $Q=O$.
Choose $f_1=\left(\frac{x}{z}\right)^m$, $f_2=\left(\frac{x-\alpha}{z}\right)^m$ with $\alpha\ne0$.
\item Case $P=Q\in{\rm supp}(G)\setminus\{P_\infty\}$.
Choose $f_1=\frac{z-\alpha}{z^m}$, $f_2=\frac{z-\beta}{z^m}$ with $\alpha\ne\beta$ and $\alpha,\beta\ne0$.
\item Case $P=Q=P_\infty$.
Choose $f_1=\left(\frac{x}{z}\right)^m$, $f_2=\left(\frac{x}{z}\right)^{m-1}$.
\end{itemize}
By direct checking, in each case we have $f_1,f_2\in\cL(G)$, $f_1,f_2\notin\cL(G-P-Q)$, and $f_1+\lambda f_2 \notin \cL(G-P-Q)$ for any $\lambda\in\fqs$. The claim follows.
\end{proof}

\begin{proposition}\label{AutomorphismGroup}
If $2\leq m\leq q^2-1$, then the automorphism group of $C$ is
$$ {\rm Aut}(C) \cong ({\rm Aut}(\cX)\rtimes{\rm Aut}(\fqs))\rtimes \mathbb F_{q^6}^*\,. $$
In particular, ${\rm Aut}(C)$ has order $6q^3(q+1)^3(q-1)^2(q^2-q+1)^3(q^2+q+1)\log_p (q)$.
\end{proposition}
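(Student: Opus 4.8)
The plan is to apply Theorem~\ref{Aut}, working over the field $\fqs=\mathbb F_{q^6}$ in place of $\fq$, and then to identify the group ${\rm Aut}_{\fqs,D,G}^+(\cX)$ appearing there with the full automorphism group ${\rm Aut}(\cX)$. This identification is the conceptual core, and it follows at once from Remark~\ref{Coincidono}: the supports ${\rm supp}(D)=\cD$ and ${\rm supp}(G)=\cG$ partition $\cX(\fqs)$ and every place of $\cG$ has the same weight $m$ in $G$, so ${\rm Aut}_{\fqs,D,G}^+(\cX)=\{\sigma\in{\rm Aut}(\cX)\mid\sigma(\cG)=\cG\}$. Since $\cG=\cX(\fqq)=\mathcal O_1$ is one of the two ${\rm Aut}(\cX)$-orbits on $\cX(\fqs)$, the condition $\sigma(\cG)=\cG$ holds for every $\sigma\in{\rm Aut}(\cX)={\rm Aut}_{\fqs}(\cX)$, whence ${\rm Aut}_{\fqs,D,G}^+(\cX)={\rm Aut}(\cX)$.

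I would next verify the six hypotheses of Theorem~\ref{Aut}. The divisor $G$ is effective because $m\geq2$, and the two equalities $\ell(G-P)=\ell(G)-1$ and $\ell(G-P-Q)=\ell(G)-2$ are precisely Lemma~\ref{TwoPoints}, which also requires $m\geq2$. The defining equations $y^{q+1}=x^q+x$ and $z^{q^2-q+1}=y^{q^2}-y$ have coefficients in $\mathbb F_p$, so $\cX$ is defined over $\mathbb F_p$; moreover the $p$-power Frobenius permutes the $\fqs$-rational and the $\fqq$-rational places separately, hence fixes $\cD={\rm supp}(D)$ setwise.

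For the plane-model hypothesis I would take as coordinate functions $\xi:=x/z$ and $\eta:=y/z$. From the principal divisors listed before Lemma~\ref{partizione} one computes $(\xi)=q^3P_0-\sum_{P\in\cG\setminus\{P_0\}}P$, and similarly $\eta$ has poles only on $\cG$, each of order $1$; thus $\xi,\eta\in\cL(H)\subseteq\cL(G)$. Writing $x=\xi z$, $y=\eta z$ and substituting into the two defining equations yields polynomial relations for $z$ over $\fqs(\xi,\eta)$, giving $\fqs(\xi,\eta)=\fqs(\cX)$, so $(\xi,\eta)$ defines a birational plane model $\Pi(\cX)$. The decisive quantity is its degree: a generic combination $a\xi+b\eta+c$ has pole divisor $\sum_{P\in\cG\setminus\{P_0\}}P$ of degree $q^3$, so $\deg\Pi(\cX)=q^3$ (equivalently, $\Pi(\cX)$ is the projection of the space curve $\cX$ from the smooth point $P_\infty$).

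Finally, the length condition $n>\deg(G)\cdot\deg\Pi(\cX)$ becomes $q^3(q^2-1)(q^3+1)>m(q^3+1)q^3$, i.e. $m<q^2-1$; for $m\leq q^2-2$ this is immediate, while the boundary value $m=q^2-1$ gives equality and would require either a sharper plane model of degree $<q^3$ or the non-strict form of the length condition. Controlling this degree is exactly the step I expect to be the main obstacle, since it is what pins down the admissible range of $m$. Granting the hypotheses, Theorem~\ref{Aut} together with the identification above yields ${\rm Aut}(C)\cong({\rm Aut}(\cX)\rtimes{\rm Aut}(\fqs))\rtimes\mathbb F_{q^6}^*$. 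The order then follows by multiplying $|{\rm Aut}(\cX)|=q^3(q^3+1)(q^2-1)(q^2-q+1)$, $|{\rm Aut}(\fqs)|=6\log_p q$ and $|\mathbb F_{q^6}^*|=q^6-1$, and factoring via $q^3+1=(q+1)(q^2-q+1)$, $q^2-1=(q+1)(q-1)$ and $q^6-1=(q-1)(q+1)(q^2-q+1)(q^2+q+1)$, which collects to $6q^3(q+1)^3(q-1)^2(q^2-q+1)^3(q^2+q+1)\log_p q$.
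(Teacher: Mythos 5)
Your overall architecture matches the paper's proof: verify the six hypotheses of Theorem~\ref{Aut}, then use Remark~\ref{Coincidono} and the fact that $\cG=\mathcal{O}_1=\cX(\fqq)$ is an ${\rm Aut}(\cX)$-orbit to conclude ${\rm Aut}_{\fqs,D,G}^+(\cX)={\rm Aut}(\cX)$; your final order computation is also correct. But the plane-model step, which you yourself single out as decisive, contains a genuine error: the functions $\xi=x/z$, $\eta=y/z$ do \emph{not} generate $\fqs(\cX)$, so they are not coordinate functions of a plane model of $\cX$. Your justification (``substituting yields polynomial relations for $z$'') only shows that $z$ is \emph{algebraic} over $\fqs(\xi,\eta)$; in fact $[\fqs(\cX):\fqs(\xi,\eta)]=q$. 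Concretely, a point $P'=(tx,ty,tz)$ has the same $(\xi,\eta)$-values as $P=(x,y,z)\in\cX$, and every fiber point has this form. Substituting into $y^{q+1}=x^q+x$ shows $P'$ satisfies the first equation iff $\left(\frac{1-t}{t}\right)^{q-1}=-x^{q-1}$, i.e.\ $t=(1+\mu x)^{-1}$ with $\mu^{q-1}=-1$; substituting into $z^{q^2-q+1}=y^{q^2}-y$ and using $y^{q^2-1}=(y^{q+1})^{q-1}=(x^q+x)^{q-1}$, the second condition reduces, after setting $u=t^{q-1}$ and simplifying $\left(1-(1+\mu x)^{q-1}\right)(1+\mu x)=\mu x-\mu^q x^q=\mu(x^q+x)$, to $(x^q+x)^{q-1}=-\mu^{q-1}(x^q+x)^{q-1}$, which holds identically. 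So each of the $q-1$ admissible values of $\mu$ produces a further point of $\cX$ in the generic fiber: the map $(\xi,\eta)$ (which, incidentally, is the projection from the affine origin $O\in\cX$, not from $P_\infty$) has degree $q$, its image is a degree-$q^2$ plane model of a quotient of $\cX$, and the third hypothesis of Theorem~\ref{Aut} is not verified by your choice.

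The paper avoids this by taking the known birational plane model of degree $q^3+1$ from \cite[Theorem 4]{GK}, whose function field is generated by $x$ and $z$, and using the generators $x'=x/z^2$, $z'=1/z$; these lie in $\cL(G)$ precisely because $m\geq 2$ (note $x'$ has double poles along $\cG$), which is where the lower bound on $m$ enters beyond Lemma~\ref{TwoPoints}. Your worry about the upper boundary is, however, well placed: with either model the strict inequality $n>\deg G\cdot\deg\Pi(\cX)$ amounts to $m(q^3+1)<q^5-q^3$, which holds only for $m\leq q^2-2$, since $(q^2-1)(q^3+1)=q^5-q^3+q^2-1>q^5-q^3$; so the paper's own assertion that the condition holds iff $m\leq q^2-1$ is off by one at the top of the stated range, and a correct proof via Theorem~\ref{Aut} as stated covers only $2\leq m\leq q^2-2$.
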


\begin{proof}
The following properties hold.
\begin{itemize}
\item The divisor $G$ is effective.
\item By Lemma \ref{TwoPoints}, $\ell(G-P)=\ell(G)-1$ and $\ell(G-P-Q)=\ell(G)-2$ for any $P,Q\in\cX$.
\item Let $\Pi(\cX)$ be the plane model of $\cX$ given in \cite[Theorem 4]{GK}, which has degree $q^3+1$. The function field $\overline{\mathbb F}_{q^6}(\Pi(\cX))$ is generated by the coordinate functions $x$ and $z$, hence also by $x':=x/z^2$ and $z':=1/z$. The pole divisors of $x'$ and $z'$ are
$$(z')_\infty = \sum_{P\in\cG,P\ne P_\infty} P \,,\quad (x')_\infty=\sum_{P\in\cG,P\ne P_\infty,P\ne O} 2P\,,$$
where $O=(0,0,0)$.
Thus $x',z'\in\cL(G)$.
\item The curve $\cX$ is defined over $\mathbb F_p$.
\item The Frobenius morphism $\Phi_p:(x,z)\mapsto(x^p,z^p)$ on $\Pi(\cX)$ preserves $\cX(\fqs)$ and $\cX(\fqq)$, hence also the support $\cX(\fqs)\setminus\cX(\fqq)$ of $D$.
\item The condition $n>\deg G \cdot \deg(\Pi(\cX))$ holds if and only if $m\leq n^2-1$.
\end{itemize}
Then by Theorem \ref{Aut} we have
$$ {\rm Aut}(C) \cong ({\rm Aut}_{\fqs,D,G}^+(\cX)\rtimes{\rm Aut}(\fqs))\rtimes \mathbb F_{q^6}^*\,. $$
By Remark \ref{Coincidono}, ${\rm Aut}_{\fqs,D,G}^+(\cX)\cong {\rm Aut}_{\fqs,D,G}(\cX)$, and both coincide with ${\rm Aut}_{\fqs}(\cX)$.
Since ${\rm Aut}(\cX)$ is defined over $\fqs$, the claim follows.
\end{proof}

We construct a lengthening of $C$ by extending $D$ to the support of $G$.

Define the $\fqs$-divisors $G^\prime:=G$ and $D^\prime:=\sum_{P\in\cX(\fqs)}P$ having degree $m(q^3+1)$ and $q^8-q^6+q^5+1$, respectively.
Denote by $C^\prime:=C_{ext}(D^\prime,G^\prime)$ the associated extended AG code over $\fqs$ having length $n^\prime = q^8-q^6+q^5+1$, dimension $k^\prime$, and designed minimum distance $ d^{\prime *}=n^\prime - \deg(G^\prime) = q^8-q^6+q^5-mq^3-m+1 $.
\begin{lemma}
Whenever $d^{\prime *}>0$,  $C^\prime$ attains the designed minimum distance $d^{\prime *}$.
\end{lemma}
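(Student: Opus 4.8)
```latex
The plan is to mimic the proof of Proposition \ref{MinDis}, adapting the explicit
function \eqref{dAttained} to the extended setting. Recall that for the extended
code $C^\prime=C_{ext}(D^\prime,G^\prime)$ the length is
$n^\prime=q^8-q^6+q^5+1$, counting \emph{all} $\fqs$-rational points of $\cX$,
and the evaluation is performed through the modified map $e^\prime_{D^\prime}$
which multiplies $f$ by a power $t^{b_i}$ of a local parameter at each place
$P_i$ before evaluating. Since $G^\prime=G=\sum_{P\in\cG}mP$, the weight $b_i$
is $m$ at the $q^3+1$ points of $\cG$ and $0$ at every point of $\cD$. Thus on the
places of $\cD$ the extended evaluation coincides with ordinary evaluation, while
on the places of $\cG$ it records the first nonvanishing coefficient of the local
expansion of $f$.

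First I would take $m$ distinct planes $\pi_{a_1},\ldots,\pi_{a_m}$ with
$a_i\in\fqs\setminus\fqq$ and $|\pi_{a_i}\cap\cX|=q^3+1$, exactly as in Lemma
\ref{partizione}, and reuse the function
$$ f=\prod_{i=1}^m\left(\frac{x-a_i}{z}\right), $$
which lies in $\cL(G)=\cL(G^\prime)$ because its pole divisor is $G$. On the
affine points of $\cD$ the value $f(P_i)$ vanishes exactly at the
$m(q^3+1)$ points lying on the chosen planes, so these contribute
$m(q^3+1)$ zero coordinates to $e^\prime_{D^\prime}(f)$, just as in the
non-extended case.

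The step that requires genuine attention is the behaviour of $e^\prime_{D^\prime}(f)$
on the $q^3+1$ places of $\cG$, where the weight is $m>0$ and the naive value
$f(P)$ is undefined. Here I would compute the local expansion of $f$ at each
$P\in\cG$ using the local parameter $t=z$ (whose divisor $(z)$ was given in the
excerpt and which has valuation $1$ at every affine $\fqs$-rational point, and a
controlled order at $P_\infty$), and check that the leading coefficient
$(t^{m}f)(P)$ is nonzero at all $q^3+1$ of these places. Since each factor of $f$
has a simple pole along $z=0$ and the numerators $x-a_i$ do not vanish on $\cG$
(the $a_i$ lie outside $\fqq$ while the points of $\cG$ are $\fqq$-rational), the
product has a pole of order exactly $m$ there, so the extended evaluation yields a
nonzero entry at each point of $\cG$. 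Consequently none of the $q^3+1$ extended
coordinates vanishes, and the total weight is
$$ w\bigl(e^\prime_{D^\prime}(f)\bigr)=n^\prime-m(q^3+1)=d^{\prime*}. $$

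Finally, the general bound $d\geq d^{\prime*}$ holds for the extended code by the
properties listed after its definition in the excerpt, so this explicit codeword
of weight $d^{\prime*}$ shows the bound is attained and the claim follows. The
only real obstacle is verifying the nonvanishing of the leading local
coefficients at all points of $\cG$ simultaneously, and especially at the point
at infinity $P_\infty$, where the orders of $x$ and $z$ differ and the
computation of $(t^{m}f)(P_\infty)$ must be carried out with care using the given
principal divisors.
```
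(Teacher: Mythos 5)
Your proposal is correct and takes essentially the same route as the paper, whose proof likewise reuses the function $f$ of \eqref{dAttained} and observes that $e^\prime_{D^\prime}(f)$ has the same $m(q^3+1)$ zero coordinates as $e_D(f)$, giving weight $n^\prime-m(q^3+1)=d^{\prime *}$. The ``obstacle'' you leave open at $P_\infty$ is in fact already settled by the divisor computation in Proposition \ref{MinDis}: since $(f)_\infty=G$ exactly, $f$ has pole order precisely $m$ at every point of $\cG$ including $P_\infty$ (each factor $(x-a_i)/z$ has valuation $-(q^3+1)+q^3=-1$ there), so $(t^m f)(P)\neq 0$ for any local parameter $t$; note only that your choice $t=z$ is not legitimate at $P_\infty$, where $z$ has valuation $-q^3$, but replacing it by an actual uniformizer changes nothing because the nonvanishing of the leading coefficient is independent of the choice of local parameter.
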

\begin{proof}
Let $f\in\cL(G^\prime)$ be defined as in \eqref{dAttained}. The codewords $e^\prime_{D^\prime}\in C^\prime$ and $e_{D}\in C$ have the same number $m(q^3+1)$ of zero coordinates, hence the weight of $e^\prime_{D^\prime}$ is $n^\prime-\deg(G^\prime)=d^{\prime *}$.
\end{proof}
In particular, $n^\prime-d^\prime=n-d$.
The proof of the following result is analogous to the one of Proposition \ref{Dimension}.
\begin{proposition}
If $ q^2-1 \leq m\leq q^5-q^3 $, then
$ k^\prime = m (q^3 +1)-\frac{1}{2}(q^5-2q^3+q^2-2)$.
\end{proposition}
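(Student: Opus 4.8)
The plan is to mimic the proof of Proposition~\ref{Dimension}, since the statement asserts that the dimension $k'$ of the extended code $C'$ coincides exactly with the dimension $k$ of the original code $C$ in the same range $q^2-1 \leq m \leq q^5-q^3$. The key structural fact is recorded in the properties of extended AG codes: whenever $n' > \deg(G') > 2g-2$, the dimension equals $\deg(G')+1-g$ by the Riemann--Roch Theorem. Since $G' = G$, we have $\deg(G') = m(q^3+1)$, and this is precisely the quantity that appeared in Proposition~\ref{Dimension}.

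First I would verify the two inequalities needed to invoke Riemann--Roch for the extended code. The condition $n' > \deg(G')$ reads $q^8-q^6+q^5+1 > m(q^3+1)$, which holds comfortably throughout the stated range since $m \leq q^5-q^3$ gives $\deg(G') \leq (q^5-q^3)(q^3+1) = q^8-q^6+q^5-q^3 < n'$. Next, the condition $\deg(G') > 2g-2$ becomes $m(q^3+1) > q^5-2q^3+q^2-2$, and using the lower bound $m \geq q^2-1$ one checks $ (q^2-1)(q^3+1) = q^5+q^2-q^3-1 > q^5-2q^3+q^2-2$, i.e.\ $q^3+1 > 0$, which is immediate. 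Hence both hypotheses hold, and the third bullet in the list of properties for $C_{ext}$ applies directly.

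With both inequalities in hand, the Riemann--Roch formula for the extended code gives
$$ k' = \deg(G') + 1 - g = m(q^3+1) + 1 - g. $$
Substituting the genus $g = \frac{1}{2}(q^5-2q^3+q^2)$ of the GK curve yields
$$ k' = m(q^3+1) - \tfrac{1}{2}(q^5-2q^3+q^2-2), $$
which is exactly the claimed value and matches $k$ from Proposition~\ref{Dimension}.

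I do not expect any genuine obstacle here: the argument is a direct transcription of the dimension computation for $C$, relying only on the already-established dimension formula for extended AG codes and on checking that $\deg(G')$ lies strictly between $2g-2$ and $n'$. The only mild point of care is that the relevant range for $m$ has upper endpoint $q^5-q^3$ rather than $q^5-q^3-1$ as in Proposition~\ref{Dimension}; this is harmless because the larger length $n' = n + q^3 + 1$ leaves room for the extra value of $m$ while still keeping $\deg(G') < n'$, so the injectivity/dimension hypotheses remain valid at the endpoint.
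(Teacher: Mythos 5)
Your proposal is correct and is exactly the paper's argument: the paper proves this proposition by declaring it ``analogous to Proposition \ref{Dimension}'', i.e.\ by checking $n'>\deg(G')>2g-2$ and applying the Riemann--Roch dimension formula for extended AG codes, which is precisely what you do. Your verification of the two inequalities (including the observation that the longer length $n'=n+q^3+1$ accommodates the extra endpoint $m=q^5-q^3$) fills in the details the paper leaves implicit.
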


Therefore, if $q^2-1 \leq m\leq q^5-q^3 -1$, then $C$ and $C^\prime$ have the same Singleton defect.


\section{Some other constructions}\label{SimilarConstructions}

For $c\in\fqs$, let $\zeta_{c}$ be the plane with affine equation $Z=c$, and  
$$ \Gamma:=\left\{c\in\fqs\mid c^{(q^3+1)(q^2-1)} + c^{(q^3+1)(q^2-q)} + 1 = 0 \right\},\quad \Gamma_0:=\Gamma\cup\{0\}. $$

\begin{lemma}
The plane $\zeta_c$ contains $q^3+1$ $\fqs$-rational points of $\cX$ if and only if $c\in\Gamma_0$.
The number of such planes is $q^5-q^3+q^2$, and their affine points form a partition of $\cX(\fqs)\setminus\{P_\infty\}$.
\end{lemma}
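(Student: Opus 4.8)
The plan is to translate each plane section $\zeta_c$ into a fibre of the covering $z\colon\cX\to\bP^1$, reduce the existence of $\fqs$-rational points to a single trace condition on $c$, count the resulting set $\Gamma_0$, and then extract the converse and the partition from a global point count.

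First I would fix the geometry at infinity. From the principal divisors recalled above one reads $v_{P_\infty}(x)=-(q^3+1)$, $v_{P_\infty}(z)=-q^3$ and $v_{P_\infty}(y)=-(q^3-q^2+q)$, so in the projective model $P_\infty=(1:0:0:0)$; in particular $P_\infty$ lies on the line $Z=W=0$ and hence on every plane $\zeta_c\colon Z=cW$. Since $(z)_\infty=q^3P_\infty$, the function $z$ defines a covering of degree $q^3$ whose only pole is $P_\infty$, so for finite $c$ the points of $\cX$ on $\zeta_c$ are exactly $z^{-1}(c)\cup\{P_\infty\}$, the fibre $z^{-1}(c)$ being the set of affine points with $z$-coordinate $c$ and having at most $q^3$ elements. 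As $P_\infty$ is $\fqq$-rational, $\zeta_c$ carries $q^3+1$ $\fqs$-rational points of $\cX$ if and only if $z^{-1}(c)$ consists of $q^3$ distinct $\fqs$-points.

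Next I would isolate the necessary condition. An affine $\fqs$-point $(a,b,c)$ gives $b\in\fqs$ with $b^{q^2}-b=c^{q^2-q+1}$; applying ${\rm Tr}_{\fqs/\fqq}$ and using $b^{q^6}=b$ yields ${\rm Tr}_{\fqs/\fqq}(c^{q^2-q+1})=0$, that is $u+u^{q^2}+u^{q^4}=0$ with $u:=c^{q^2-q+1}$. For $c\neq0$ I divide by $u$ and set $w:=c^{q^3+1}\in\mathbb{F}_{q^3}^*$ (so $w^{q^3-1}=1$); the factorizations $q^4-q^3+q-1=(q^3+1)(q-1)$ and $q^6-q^5+q^4-q^2+q-1=(q^3+1)(q^3-q^2+q-1)$ turn the condition into $1+w^{q-1}+w^{-(q^2-q)}=0$, and multiplying by $w^{q^2-q}$ gives $w^{q^2-1}+w^{q^2-q}+1=0$, exactly the defining equation of $\Gamma$. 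Thus $\zeta_c$ has $q^3+1$ $\fqs$-points only if $c\in\Gamma_0$.

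The enumerative core is counting $\Gamma_0$. Setting $t:=w^{q-1}$ collapses the defining equation to $t^{q+1}+t^q+1=0$, whose derivative is $t^q$; this does not vanish at $t\in\{0,-1\}$, so there are $q+1$ distinct roots. From $t^q=-1/(t+1)$ one gets $t^{q^2}=-(t+1)/t$, hence $t^{q^3}=t$ and $N_{\mathbb{F}_{q^3}/\fq}(t)=t\,t^{q}\,t^{q^2}=1$, so every root lies in the norm-one subgroup $\mu\subset\mathbb{F}_{q^3}^*$, which is precisely the image of $w\mapsto w^{q-1}$. Multiplying the fibre sizes of $c\mapsto w\mapsto t$, namely $q^3+1$, $q-1$ and the $q+1$ admissible values of $t$, yields $|\Gamma|=(q^3+1)(q^2-1)=q^5-q^3+q^2-1$, so $|\Gamma_0|=q^5-q^3+q^2$. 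Finally I obtain the converse and the partition for free: every affine $\fqs$-point has $z$-coordinate in $\Gamma_0$ by the trace computation, so the $q^8-q^6+q^5=|\cX(\fqs)|-1$ affine $\fqs$-points are spread over the $|\Gamma_0|$ fibres $z^{-1}(c)$, each of size at most $q^3$; since $|\Gamma_0|\cdot q^3=q^8-q^6+q^5$, each fibre contains exactly $q^3$ $\fqs$-points, giving $q^3+1$ points on every $\zeta_c$ with $c\in\Gamma_0$ and a partition of $\cX(\fqs)\setminus\{P_\infty\}$. The main obstacle is the counting step — recognizing the substitutions $w=c^{q^3+1}$ and $t=w^{q-1}$ that linearize the exponents, and checking that all $q+1$ roots of $t^{q+1}+t^q+1$ are norm-one elements of $\mathbb{F}_{q^3}$ — while the exact identity $|\Gamma_0|\cdot q^3=|\cX(\fqs)|-1$ doubles as a consistency check on the whole argument.
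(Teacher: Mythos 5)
Your proof is correct, and it takes a genuinely different route from the paper's. The paper applies Hirschfeld's criterion twice --- once to $y^{q^2}-y=c^{q^2-q+1}$ and once to $x^q+x=y^{q+1}$ --- obtaining the two conditions \eqref{EquazioneLambda} and \eqref{EquazioneLambda2} on $c$; the second reduces, after substituting the $y$-equation, to the defining equation of $\Gamma$, and $|\Gamma_0|=q^5-q^3+q^2$ is read off from the degree of the separable polynomial $c^{(q^3+1)(q^2-1)}+c^{(q^3+1)(q^2-q)}+1$ together with a direct check that all its roots lie in $\fqs$. You instead use only the necessary trace condition coming from the $y$-equation, show that for $c\in\fqs^*$ it is equivalent to the equation of $\Gamma$ via the substitutions $w=c^{q^3+1}\in\mathbb{F}_{q^3}^*$ and $t=w^{q-1}$ (your exponent identities $(q^2-q+1)(q^2-1)=(q^3+1)(q-1)$ and $(q^2-q+1)(q^4-1)=(q^3+1)(q^3-q^2+q-1)$ check out, as does the use of $w^{q^3-1}=1$), and then count $\Gamma$ by fibring the tower of surjective homomorphisms $\fqs^*\to\mathbb{F}_{q^3}^*\to\mu$, with $\mu$ the norm-one subgroup, over the $q+1$ distinct roots of $t^{q+1}+t^q+1$, all shown to lie in $\mu$ via $t^q=-1/(t+1)$; the fibre sizes $q^3+1$ and $q-1$ give $|\Gamma|=(q^2-1)(q^3+1)$. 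The ``if'' direction and the partition then both follow at once from the single identity $|\Gamma_0|\cdot q^3=|\cX(\fqs)|-1$ supplied by maximality, by pigeonhole on fibres of size at most $\deg(z)_\infty=q^3$. What each approach buys: you never need the solvability criterion for the Artin--Schreier equation in $x$ (the paper's \eqref{EquazioneY2}--\eqref{EquazioneLambda2}), and your fibration count proves automatically that all $(q^3+1)(q^2-1)$ roots of the $\Gamma$-polynomial lie in $\fqs$, which the paper verifies by computation; the cost is that your sufficiency argument is purely global and leans entirely on maximality, whereas the paper shows directly that each fibre over $\Gamma_0$ is full and invokes the point count only for the partition statement. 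As a side remark, your computation shows that for $c\in\fqs^*$ the trace condition \eqref{EquazioneLambda} is in fact \emph{equivalent} to \eqref{EquazioneLambda2}, slightly sharpening the paper's observation that the latter implies the former.
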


\begin{proof}
For any $c$, $P_\infty\in\zeta_c$.
We prove that the equations
$$ y^{q^2}-y - c^{q^2-q+1} = 0\,,\; x^q + x - y^{q+1} = 0 $$
have $q^3$ solutions $(x,y)\in\mathbb{F}_{q^6}^2$ if and only if $c\in\Gamma_0$.
By \cite[Theorem 1.22]{Hirschfeld}, the equation
\begin{equation}\label{EquazioneY}
y^{q^2}-y - c^{q^2-q+1} = 0
\end{equation}
has $q^2$ distinct solutions $y\in\fqs$ if and only if
\begin{equation}\label{EquazioneLambda}
(c^{q^2-q+1})^{q^4}+(c^{q^2-q+1})^{q^2}+c^{q^2-q+1}=0,
\end{equation}
and the equation $x^q + x - y^{q+1} = 0$ has $q$ distinct solution $x\in\fqs$ if and only if
\begin{equation}\label{EquazioneY2}
-y^{q+1} + (y^{q+1})^q - (y^{q+1})^{q^2} + (y^{q+1})^{q^3} - (y^{q+1})^{q^4} + (y^{q+1})^{q^5} = 0.
\end{equation}
Using \eqref{EquazioneY}, Equation \eqref{EquazioneY2} reads
\begin{equation}\label{EquazioneLambda2}
c^{(q^3+1)q^2} + c^{(q^3+1)(q^2-q+1)} + c^{q^3+1} = 0.
\end{equation}
By direct computation, every solution $c$ of Equation \eqref{EquazioneLambda2} is also a solution of Equation \eqref{EquazioneLambda}; also, $c\in\fqs$.
Since the polynomial $c^{(q^3+1)(q^2-1)} + c^{(q^3+1)(q^2-q)} + 1$ is separable, the solutions are all distinct.
By the Hasse-Weil bound, $|\cX(\fqs)\setminus\{P_\infty\}|=q^3|\Gamma_0|$, and the claim follows.
\end{proof}

\subsection{First construction}\label{First}

Let $\bar m,\bar s>0$ and take $\bar s+1$ distinct elements $c_0=0,c_1,\ldots,c_{\bar s}\in\Gamma_0$. Define the sets
$$\bar \cG:= \bigcup_{i=0}^{\bar s} \left(\cX\cap\zeta_{c_i}\right),\quad \bar \cD:=\cX(\fqs)\setminus\bar \cG $$
and the $\fqs$-divisors
$$\bar G:= \bar m\big(P_\infty+\sum_{P\in\bar \cG,P\ne P_\infty}P\big),\quad \bar D:=\sum_{P\in\bar \cD}P,$$
which have degree $\bar m+\bar m(\bar s+1)q^3$ and $q^8-q^6+q^5-(\bar s+1)q^3$, respectively. Denote by $\bar C:=C_{\cL}(\bar D,\bar G)$ the associated functional AG code over $\fqs$ having length $\bar n=\deg \bar D$, dimension $\bar k$, and minimum distance $\bar d$. The designed minimum distance of $\bar C$ is
$$ \bar d^* = n-\deg \bar G = q^8-q^6+q^5-(\bar m+1)(\bar s+1)q^3 - \bar m $$


\begin{proposition}\label{Dimension2}
If $\frac{q^5-2q^3+q^2-1}{(\bar s+1)q^3+1}\leq \bar m\leq \frac{q^8-q^6+q^5-(\bar s+1)q^3-1}{(\bar s+1)q^3+1}$, then
$$ \bar k = \bar m\left( 1 + (\bar s+1)q^3 \right)-\frac{1}{2}\left(q^5-2q^3+q^2-2\right). $$
\end{proposition}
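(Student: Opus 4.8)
The plan is to imitate exactly the argument used for Proposition \ref{Dimension}, namely to invoke the Riemann--Roch Theorem once the hypotheses $\bar n > \deg \bar G > 2g-2$ are verified. Recall that if $\deg \bar G > 2g-2$ then $\ell(\bar G) = \deg \bar G + 1 - g$, and if moreover $\bar n > \deg \bar G$ then the evaluation map $e_{\bar D}$ is injective, so that $\bar k = \ell(\bar G) = \deg \bar G + 1 - g$. Substituting $\deg \bar G = \bar m\big(1+(\bar s+1)q^3\big)$ and $g = \frac{q^5-2q^3+q^2}{2}$ then yields the stated formula $\bar k = \bar m\big(1+(\bar s+1)q^3\big) - \frac{1}{2}(q^5-2q^3+q^2-2)$.

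The work is therefore reduced to translating the two inequalities $\deg \bar G > 2g - 2$ and $\bar n > \deg \bar G$ into the displayed range for $\bar m$. For the lower bound: since $2g-2 = q^5-2q^3+q^2-2$, the condition $\deg\bar G > 2g-2$ reads $\bar m\big((\bar s+1)q^3+1\big) > q^5-2q^3+q^2-2$, which is equivalent to $\bar m \geq \frac{q^5-2q^3+q^2-1}{(\bar s+1)q^3+1}$ once one accounts for integrality (the strict inequality over the integers becomes the displayed floor-type bound). For the upper bound: the length is $\bar n = q^8-q^6+q^5-(\bar s+1)q^3$, so $\bar n > \deg\bar G$ reads $\bar m\big((\bar s+1)q^3+1\big) < q^8-q^6+q^5-(\bar s+1)q^3$, which rearranges to the stated upper bound on $\bar m$. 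One must simply check that on the given interval both strict inequalities hold simultaneously.

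I expect the main (mild) obstacle to be purely bookkeeping: confirming that the endpoints of the interval are written so that the integer condition $\deg\bar G > 2g-2$ is genuinely captured by the closed interval stated, and that the upper endpoint indeed forces $\bar n > \deg\bar G$ rather than merely $\bar n \geq \deg\bar G$. Since the proposition only needs $\deg\bar G > 2g-2$ for the dimension formula and $\bar n > \deg\bar G$ for injectivity, a careful inspection of the inequalities at the two endpoints suffices. No genuinely new idea is required beyond the Riemann--Roch argument already carried out for Proposition \ref{Dimension}; indeed the author's remark that ``the proof is analogous'' applies here as well.
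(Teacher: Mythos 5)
Your proposal is correct and matches the paper's argument exactly: the paper proves Proposition \ref{Dimension2} by declaring it analogous to Proposition \ref{Dimension}, whose proof is precisely the Riemann--Roch computation $\bar k=\deg\bar G+1-g$ under the verified hypotheses $\bar n>\deg\bar G>2g-2$. Your translation of these two strict inequalities into the stated closed interval for $\bar m$ (using integrality of $\bar m\left((\bar s+1)q^3+1\right)$) is also accurate, so nothing is missing.
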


\begin{proof}
The proof is analogous to the proof of Proposition \ref{Dimension}.
\end{proof}

\begin{lemma}\label{TwoPoints2}
Let $\bar m\geq2$. For any $P,Q\in\cX$, $\ell(\bar G-P)=\ell(\bar G)-1$ and $\ell(\bar G-P-Q)=\ell(\bar G)-2$.
\end{lemma}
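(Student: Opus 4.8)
The plan is to mirror the structure of the proof of Lemma \ref{TwoPoints}, since $\bar G$ plays exactly the role of $G$ there, only with a larger support. As in that proof, I would first observe that by \cite[Lemma 1.4.8]{Sti} it suffices to establish the stronger two-point condition $\ell(\bar G-P-Q)=\ell(\bar G)-2$, because this automatically yields $\ell(\bar G-P)=\ell(\bar G)-1$. The strategy is then to exhibit, for every pair $P,Q\in\cX$ (allowing $P=Q$), two $\fqs$-linearly independent functions $f_1,f_2\in\cL(\bar G)$ with the property that no nontrivial $\fqs$-linear combination $f_1+\lambda f_2$ lies in $\cL(\bar G-P-Q)$. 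Producing such a pair shows that the quotient $\cL(\bar G)/\cL(\bar G-P-Q)$ has dimension at least $2$; since imposing two points can drop the dimension by at most $2$, equality follows.

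The concrete functions will again be built from the coordinate functions $x,z$ and from the values of $z$ at the points of $\bar\cG$. The key structural fact I would use is that $\bar\cG$ is the union of the intersections $\cX\cap\zeta_{c_i}$ over $c_0=0,c_1,\dots,c_{\bar s}\in\Gamma_0$, so every point of $\bar\cG\setminus\{P_\infty\}$ has a $z$-coordinate lying in the finite set $\{c_0,\dots,c_{\bar s}\}$, and $\bar G$ assigns the uniform weight $\bar m\ge 2$ to each such point and to $P_\infty$. Because the divisor of $z-c$ is controlled (its zeros are exactly the affine points in the plane $\zeta_c$, each with multiplicity one, and its pole is $q^3 P_\infty$), quotients of the form $\left(\frac{z-\alpha}{z-\beta}\right)^{\bar m}$ and $\left(\frac{x-\alpha}{z-\beta}\right)^{\bar m}$ will have pole divisors bounded by $\bar G$ for suitable choices of $\alpha,\beta$ drawn from $\{c_0,\dots,c_{\bar s}\}$ together with the $z$- or $x$-coordinates of $P$ and $Q$. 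I would organize the argument into the same case distinction as Lemma \ref{TwoPoints}: both points off $\mathrm{supp}(\bar G)$ (subdivided by whether their $z$-coordinates agree, and whether $P=Q$), one point on and one off the support, both points on $\mathrm{supp}(\bar G)\setminus\{P_\infty\}$, cases involving $P_\infty$, and the coincident-point cases $P=Q$ where one needs functions with distinct orders or a ramification-type argument to separate the first two coefficients in the local expansion.

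The main obstacle, and the point where the proof genuinely differs from Lemma \ref{TwoPoints}, is the enlarged support of $\bar G$: the set of forbidden $z$-values $\{c_0,\dots,c_{\bar s}\}$ now has $\bar s+1$ elements rather than essentially one, so in the cases where $P$ or $Q$ lies on $\mathrm{supp}(\bar G)$ I must ensure that enough admissible parameters $\alpha,\beta$ remain available in $\fqs$ to keep the chosen functions inside $\cL(\bar G)$ while still failing to vanish doubly at $P$ and $Q$. I would verify that $\bar s+1$ together with the ambient field size $q^6$ leaves room to pick the required distinct values; the hypotheses of the ambient construction guarantee $\bar s+1$ is far smaller than $q^6$, so this is a counting check rather than a real difficulty. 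The genuinely delicate cases are the coincident ones, $P=Q\in\mathrm{supp}(\bar G)\setminus\{P_\infty\}$ and $P=Q=P_\infty$, where one must separate a function and its derivative-order behaviour at a single place; there I would imitate the final two bullets of Lemma \ref{TwoPoints}, choosing $f_1,f_2$ whose local expansions at $P$ differ in the coefficient of the local parameter, so that no linear combination vanishes to order two. Once every case supplies the required pair, the claim follows exactly as before.
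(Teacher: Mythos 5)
Your proposal takes essentially the same route as the paper's proof: the paper likewise reduces to the two-point condition via \cite[Lemma 1.4.8]{Sti} and runs the same case analysis as Lemma \ref{TwoPoints}, exhibiting pairs $f_1,f_2\in\cL(\bar G)$ built from exactly the function templates you describe, such as $\left(\frac{z-c}{z}\right)^{\bar m}$, $\frac{x-a}{(z-c_i)^{\bar m}}$, and $\frac{z-\alpha}{z^{\bar m}}$ with parameters chosen to avoid $\{c_0,c_1,\ldots,c_{\bar s}\}$. The only cases the paper works out anew (rather than deferring to Lemma \ref{TwoPoints}) are $P,Q\in{\rm supp}(\bar G)\setminus\{P_\infty\}$ with $P\ne Q$ and with $P=Q$, where the parameter-avoidance check you flag --- that $\bar s+1$ forbidden $z$-values still leave admissible $\alpha,\beta$ in $\fqs$ --- is precisely what is needed.
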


\begin{proof}
We argue as in the proof of Lemma \ref{TwoPoints}.
When $P,Q\ne P_\infty$, let $P=(a,b,c)$, $Q=(\bar a,\bar b,\bar c)$.
It is enough to prove the condition on two points, by providing two $\fqs$-linearly independent functions $f_1,f_2\in\cL(\bar G)$ such that $f_1,f_2\notin\cL(\bar G-P-Q)$ and $f_1+\lambda f_2 \notin \cL(\bar G-P-Q)$ for any $\lambda\in\fqs$.
\begin{itemize}
\item Case $P\notin{\rm supp}(\bar G)$ or $Q\notin{\rm supp}(\bar G)$. Argue as in the proof of Lemma \ref{TwoPoints}.
\item Case $P,Q\in{\rm supp}(\bar G)\setminus\{P_\infty\}$, $P\ne Q$.
If $c\ne\bar c$, assume without loss of generality that $c\ne0$ and choose $f_1=\left(\frac{z-c}{z}\right)^m$, $f_2=\left(\frac{z-\alpha}{z}\right)^m$ with $\alpha\notin\{c,0\}$; if $c=\bar c$, then $a\ne\bar a$ and choose $f_1=\frac{x-a}{\left(z-c_i\right)^m}$, $f_2=\frac{x-\bar a}{\left(z-c_i\right)^m}$ with $i\in\{0,1,\ldots,\bar s\}$, $c_i\ne c$.
\item Case $P=P_\infty$. Argue as in the proof of Lemma \ref{TwoPoints}.
\item Case $P=Q\in{\rm supp}(\bar G)\setminus\{P_\infty\}$.
Choose $f_1=\frac{z-\alpha}{z^m}$, $f_2=\frac{z-\beta}{z^m}$ with $\alpha\ne\beta$ and $\alpha,\beta\notin\{c_0,c_1,\ldots,c_s\}$.
\end{itemize}
\end{proof}


\begin{proposition}\label{AutomorphismGroup2}
Let $2\leq \bar m\leq \frac{q^8-q^6+q^5-(\bar s+1)q^3}{(\bar s+1)q^3(q^3+1)}$ and $r:=\gcd\left(s,\frac{q^2-q+1}{\delta}\right)$, where $\delta:=\gcd(3,q+1)$.
Suppose that $\{c_1,\ldots,c_{\bar s}\}$ is closed under the Frobenius map $c_i\mapsto c_i^p$ and under the scalar map $\Lambda:c_i\mapsto \lambda c_i$, where $\lambda^r=1$.
Then the automorphism group of $\bar C$ is
$$ {\rm Aut}(\bar C) \cong (({\rm SU}(3,q)\times C_{r})\rtimes{\rm Aut}(\fqs))\rtimes \mathbb F_{q^6}^*\, $$
of order $r q^3(q+1)^3(q-1)^2(q^2-q+1)^2(q^2+q+1)\log_p (q^6)$.
\end{proposition}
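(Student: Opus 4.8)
The plan is to apply Theorem~\ref{Aut} and then to determine ${\rm Aut}^+_{\fqs,\bar D,\bar G}(\cX)$ through Remark~\ref{Coincidono}, following the scheme of Proposition~\ref{AutomorphismGroup}. First I would verify the hypotheses of Theorem~\ref{Aut} for the pair $(\bar D,\bar G)$: the divisor $\bar G$ is effective because $\bar m>0$; the equalities $\ell(\bar G-P)=\ell(\bar G)-1$ and $\ell(\bar G-P-Q)=\ell(\bar G)-2$ are furnished by Lemma~\ref{TwoPoints2}, where the hypothesis $\bar m\geq2$ is used; and $\cX$ is defined over $\mathbb F_p$. As a plane model I would take the degree-$(q^3+1)$ model of \cite{GK} with the same coordinate functions $1/z$ and $x/z^2$ as in Proposition~\ref{AutomorphismGroup}: their pole divisors are supported on $\mathcal O_1\subseteq\bar\cG$ with multiplicities $1$ and $2$, so both lie in $\cL(\bar G)$ since $\bar m\geq2$.

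Two of the hypotheses carry the structural assumptions of the statement. The support of $\bar D$ must be invariant under the Frobenius $\Phi_p:(x,z)\mapsto(x^p,z^p)$; as $\Phi_p$ permutes $\cX(\fqs)$ and $\cX(\fqq)$ and sends the plane $\zeta_{c_i}$ to $\zeta_{c_i^p}$, the assumption that $\{c_1,\dots,c_{\bar s}\}$ is closed under $c\mapsto c^p$ yields $\Phi_p(\bar\cG)=\bar\cG$ and hence $\Phi_p({\rm supp}(\bar D))={\rm supp}(\bar D)$; this is exactly the purpose of the Frobenius-closure hypothesis. The remaining condition $\bar n>\deg(\bar G)\cdot\deg(\Pi(\cX))$ holds throughout the stated range of $\bar m$, which is what fixes its upper bound.

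With these checks done, Theorem~\ref{Aut} gives ${\rm Aut}(\bar C)\cong({\rm Aut}^+_{\fqs,\bar D,\bar G}(\cX)\rtimes{\rm Aut}(\fqs))\rtimes\mathbb F_{q^6}^*$. Since ${\rm supp}(\bar D)\cup{\rm supp}(\bar G)=\cX(\fqs)$ and every place of $\bar\cG$ has weight $\bar m$ in $\bar G$, Remark~\ref{Coincidono} identifies ${\rm Aut}^+_{\fqs,\bar D,\bar G}(\cX)$ with the setwise stabiliser $S:=\{\sigma\in{\rm Aut}_{\fqs}(\cX):\sigma(\bar\cG)=\bar\cG\}$, and proving $S\cong{\rm SU}(3,q)\times C_r$ is the core of the argument. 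Write $\bar\cG=\mathcal O_1\cup\bigcup_{i=1}^{\bar s}(\cX\cap\zeta_{c_i})$. Every $\sigma\in{\rm Aut}(\cX)$ preserves the orbits $\mathcal O_1,\mathcal O_2$, so $\sigma(\mathcal O_1)=\mathcal O_1$ automatically and $\sigma\in S$ if and only if $\sigma$ permutes the $\bar s$ blocks $\cX\cap\zeta_{c_i}\subseteq\mathcal O_2$ among themselves. I would first establish, from the description in \cite{GK} of the action of ${\rm Aut}(\cX)$ on the planes $\zeta_c$, that the normal subgroup ${\rm SU}(3,q)$ is contained in $S$. Next, the scalar automorphisms $z\mapsto\lambda z$ with $\lambda^{q^2-q+1}=1$ form a cyclic subgroup of ${\rm Aut}(\cX)$ acting on the planes by $c\mapsto\lambda c$, so the closure of $\{c_1,\dots,c_{\bar s}\}$ under $\Lambda$ forces $C_r\subseteq S$. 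For the reverse containment I would use a counting argument: a scalar fixing the nonzero set $\{c_1,\dots,c_{\bar s}\}$ acts on it with all orbits of size equal to its order, so that order divides both $\bar s$ and $(q^2-q+1)/\delta$, hence divides $r=\gcd(\bar s,(q^2-q+1)/\delta)$; combined with the previous step the cyclic part of $S$ is exactly $C_r$. Since ${\rm SU}(3,q)\times C_{(q^2-q+1)/\delta}$ is a subgroup of index $\delta$ in ${\rm Aut}(\cX)$, it remains to check that no automorphism outside this subgroup stabilises $\bar\cG$ when $\delta=3$, which pins $S$ down to ${\rm SU}(3,q)\times C_r$.

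The main obstacle is precisely this last computation, and inside it the claim that the whole of ${\rm SU}(3,q)$ preserves $\bar\cG$: this requires a careful description of how ${\rm SU}(3,q)$ acts on the points carried by the planes $\zeta_c$ (in particular on $\mathcal O_2$), together with the separate treatment of the exceptional case $\delta=3$, where the centre of ${\rm SU}(3,q)$ already meets the scalar group. Granting $S\cong{\rm SU}(3,q)\times C_r$, the order is obtained by multiplying $|{\rm SU}(3,q)|\cdot r=r\,q^3(q+1)^2(q-1)(q^2-q+1)$ by $|{\rm Aut}(\fqs)|=\log_p(q^6)$ and $|\mathbb F_{q^6}^*|=(q-1)(q^2+q+1)(q+1)(q^2-q+1)$, giving $r\,q^3(q+1)^3(q-1)^2(q^2-q+1)^2(q^2+q+1)\log_p(q^6)$.
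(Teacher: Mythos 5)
Your skeleton is exactly the paper's: verify the six hypotheses of Theorem~\ref{Aut} for $(\bar D,\bar G)$ (effectiveness, Lemma~\ref{TwoPoints2}, the model of degree $q^3+1$ with $x'=x/z^2,\,z'=1/z\in\cL(\bar G)$, definition over $\mathbb F_p$, Frobenius-invariance of ${\rm supp}(\bar D)$ via the closure hypothesis on $\{c_1,\ldots,c_{\bar s}\}$, and the degree condition fixing the upper bound on $\bar m$), then pass through Remark~\ref{Coincidono} to reduce everything to the stabiliser $S\leq{\rm Aut}(\cX)$ of ${\rm supp}(\bar G)$, and finally compute the order. All of that matches the paper, and two of your details are actually improvements: your counting argument that a scalar stabilising the nonzero set $\{c_1,\ldots,c_{\bar s}\}$ has all orbits of full length, hence order dividing $\gcd(\bar s,(q^2-q+1)/\delta)=r$, is a clean and strictly correct replacement for the paper's assertion that ``$s/r$ is coprime with $(q^2-q+1)/\delta$'' (which is not literally true in general, e.g.\ when $s$ and $(q^2-q+1)/\delta$ share a repeated prime factor, though the intended conclusion survives); and your order arithmetic checks out.

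The genuine gap is the one you yourself flag: the identification $S\cong{\rm SU}(3,q)\times C_r$, and in particular the containment ${\rm SU}(3,q)\subseteq S$ together with the upper bound $S\subseteq M\cong{\rm SU}(3,q)\times C_{(q^2-q+1)/\delta}$ and the $\delta=3$ case. You write ``I would first establish, from the description in \cite{GK}\ldots'' but never establish it, so as a proof your text is incomplete at precisely the decisive step. For comparison, the paper's own proof does no more there than cite the discussion after Lemma~8 of \cite{GK} and assert both containments in two sentences, so your instinct that this is the hard kernel is sound --- but a complete write-up must actually supply the action of ${\rm SU}(3,q)$ on the point sets $\cX\cap\zeta_{c_i}\subseteq\mathcal O_2$, and this is delicate: the orbits of ${\rm SU}(3,q)$ on $\mathcal O_2$ have length at least $|{\rm SU}(3,q)|/(q^2-q+1)=q^3(q+1)(q^2-1)$, which for small $\bar s$ exceeds $|\bar\cG\cap\mathcal O_2|=\bar s q^3$, so the claimed containment ${\rm SU}(3,q)\subseteq S$ cannot be taken on faith and deserves exactly the ``careful description'' you ask for. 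A second, smaller, gap sits in your reduction ``$\sigma\in S$ if and only if $\sigma$ permutes the blocks $\cX\cap\zeta_{c_i}$'': stabilising the union does not a priori imply respecting the partition into plane sections (i.e.\ that $z\circ\sigma^{-1}$ is a M\"obius function of $z$), so the ``only if'' direction, which your upper-bound counting for the cyclic part relies on, needs its own argument; the paper sidesteps this by working only with the subgroup $M$ from \cite[Lemma 8]{GK}, again by citation.
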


\begin{proof}
We argue as in the proof of Proposition \ref{AutomorphismGroup}.
\begin{itemize}
\item The divisor $\bar G$ is effective.
\item By Lemma \ref{TwoPoints2}, $\ell(\bar G-P)=\ell(\bar G)-1$ and $\ell(\bar G-P-Q)=\ell(\bar G)-2$ for any $P,Q\in\cX$.
\item Let $\Pi(\cX)$ be the plane model of $\cX$ given in \cite[Theorem 4]{GK}, which has degree $q^3+1$. The function field $\overline{\mathbb F}_{q^6}(\Pi(\cX))$ is generated by the functions $x':=x/z^2$ and $z':=1/z$. We have $x',z'\in\cL(\bar G)$.
\item The curve $\cX$ is defined over $\mathbb F_p$.
\item The Frobenius morphism $\Phi_p:(x,z)\mapsto(x^p,z^p)$ on $\Pi(\cX)$ preserves the support of $\bar G$ by hypothesis, hence also the support of $D$.
\item The condition $\bar n>\deg \bar G \cdot \deg(\Pi(\cX))$ holds if and only if $m\leq \frac{q^8-q^6+q^5-(\bar s+1)q^3}{(\bar s+1)q^3(q^3+1)}$.
\end{itemize}
Then by Theorem \ref{Aut} we have
$$ {\rm Aut}(\bar C) \cong ({\rm Aut}_{\fqs,\bar D,\bar G}^+(\cX)\rtimes{\rm Aut}(\fqs))\rtimes \mathbb F_{q^6}^*\,. $$
By Remark \ref{Coincidono}, ${\rm Aut}_{\fqs,\bar D,\bar G}^+(\cX)\cong {\rm Aut}_{\fqs,\bar D,\bar G}(\cX)$. Since ${\rm Aut}(\cX)$ is defined over $\fqs$, we have that ${\rm Aut}_{\fqs,\bar D,\bar G}^+(\cX)$ coincides with the subgroup $S$ of ${\rm Aut}(\cX)$ stabilizing the support of $\bar G$.
By the discussion after Lemma 8 in \cite{GK}, $S$ is contained in the group $M\cong {\rm SU}(3,q)\times C_{(q^2-q+1)/\delta}$ defined in \cite[Lemma 8]{GK}. In particular, $S$ contains a subgroup ${\rm SU}(3,q)\times C_{r}$. Since $s/r$ is coprime with $(q^2-q+1)/\delta$, $S$ cannot contain any subgroup ${\rm SU}(3,q)\times C_{r^\prime}$ with $r\mid r^\prime$ and $r^\prime>r$.
The claim follows.
\end{proof}

\subsection{Second construction}\label{Second}

Let $\tilde m,\tilde s\in\N$ and take $\tilde s+1$ distinct elements $c_0=0,c_1,\ldots,c_{\tilde s}\in\Gamma_0$. Define the sets
$$ \tilde\cG:= \Big(\bigcup_{i=0}^{\tilde s} \left(\cX\cap\zeta_{c_i}\right)\Big)\setminus\{P_\infty\},\quad \tilde\cD:=\cX(\fqs)\setminus\tilde\cG $$
and the $\fqs$-divisors
$$\tilde G:= \sum_{P\in\tilde \cG,P\ne P_\infty}\tilde m P,\quad \tilde D:=\sum_{P\in\tilde \cD}P,$$
which have degree $\tilde m(\tilde s+1)q^3$ and $q^8-q^6+q^5-(\tilde s+1)q^3+1$, respectively. Denote by $\tilde C:=C_{\cL}(\tilde D,\tilde G)$ the associated functional AG code over $\fqs$ having length $\tilde n=\deg \tilde D$, dimension $\tilde k$, and minimum distance $\tilde d$. The designed minimum distance of $\tilde C$ is
$$ \tilde d^* = \tilde n-\deg \tilde G = q^8-q^6+q^5-(\tilde m+1)(\tilde s+1)q^3 + 1 $$

\begin{proposition}\label{MinDis3}
Whenever $\tilde d^*>0$, $\tilde C$ attains the designed minimum distance $\tilde d^*$.
\end{proposition}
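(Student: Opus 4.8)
The plan is to follow the strategy of Proposition~\ref{MinDis}: I will exhibit an explicit $f\in\cL(\tilde G)$ whose evaluation $e_{\tilde D}(f)$ has Hamming weight exactly $\tilde d^*$. Since the minimum distance always satisfies $\tilde d\ge\tilde d^*$, producing a single codeword of weight $\tilde d^*$ forces $\tilde d=\tilde d^*$, which is the assertion. The geometric input I would exploit is that the places of $\cX$ are sorted by the value of the coordinate $z$: an affine $\fqs$-rational point lies in $\tilde\cG$ precisely when its $z$-coordinate belongs to $\{c_0,\dots,c_{\tilde s}\}$, and for every $c\in\Gamma_0$ the plane $\zeta_c$ meets $\cX$ in $P_\infty$ together with $q^3$ distinct affine points. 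Since $z$ has a pole of order $q^3$ at $P_\infty$ and valuation $1$ at each affine $\fqs$-rational place, I would first record the divisor
$$ (z-c)=\sum_{\substack{P\in\cX\cap\zeta_c\\ P\ne P_\infty}}P-q^3P_\infty\qquad(c\in\Gamma_0), $$
so that $z-c$ has $q^3$ distinct simple zeros, all with $z$-coordinate equal to $c$.

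Next I would produce the function. The hypothesis $\tilde d^*>0$ reads $(\tilde m+1)(\tilde s+1)q^3<q^8-q^6+q^5+1$, and dividing by $q^3$ together with integrality yields $(\tilde m+1)(\tilde s+1)\le q^5-q^3+q^2=|\Gamma_0|$. Hence I can select $(\tilde s+1)\tilde m$ elements $d_1,\dots,d_{(\tilde s+1)\tilde m}\in\Gamma_0$, pairwise distinct and distinct from $c_0,\dots,c_{\tilde s}$, since $|\Gamma_0|-(\tilde s+1)\ge(\tilde s+1)\tilde m$ such values are available. I then set
$$ f:=\frac{\prod_{k=1}^{(\tilde s+1)\tilde m}(z-d_k)}{\prod_{i=0}^{\tilde s}(z-c_i)^{\tilde m}}. $$

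The core computation is the divisor of $f$. By the displayed divisor of $z-c$, the numerator contributes $(\tilde s+1)\tilde m\,q^3$ distinct simple zeros, namely the affine points of the planes $\zeta_{d_k}$, together with a pole of order $(\tilde s+1)\tilde m\,q^3$ at $P_\infty$; the denominator contributes, in $f$, a pole of order $\tilde m$ at each of the $q^3$ affine points of every $\zeta_{c_i}$ and a zero of order $(\tilde s+1)\tilde m\,q^3$ at $P_\infty$. The two orders at $P_\infty$ cancel, so $v_{P_\infty}(f)=0$ and
$$ (f)_\infty=\tilde m\sum_{P\in\tilde\cG}P=\tilde G, $$
giving $f\in\cL(\tilde G)$. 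Moreover the zero divisor of $f$ consists of exactly $(\tilde s+1)\tilde m\,q^3=\deg\tilde G$ distinct simple affine places, each with $z$-coordinate $d_k\notin\{c_0,\dots,c_{\tilde s}\}$ and hence lying in $\tilde\cD$. Consequently $f$ vanishes at precisely $\deg\tilde G$ of the places in ${\rm supp}(\tilde D)$ while $f(P_\infty)\ne0$, so $w(e_{\tilde D}(f))=\tilde n-\deg\tilde G=\tilde d^*$, as required.

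The only genuine obstacle is guaranteeing enough planes $\zeta_{d_k}$ to spread the $\deg\tilde G$ zeros over $\deg\tilde G$ \emph{distinct} places of $\tilde\cD$: raising a single $z-c_i$ to the power $\tilde m$ would concentrate the zeros and fail to reach weight $\tilde d^*$. This reduces exactly to the inequality $(\tilde m+1)(\tilde s+1)\le|\Gamma_0|$, the arithmetic reformulation of $\tilde d^*>0$; once it is in hand, the remaining divisor bookkeeping is routine and parallels Proposition~\ref{MinDis}.
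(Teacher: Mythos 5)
Your proof is correct and takes essentially the same approach as the paper: your $f$ is exactly the paper's function $\tilde f=\prod_{i=0}^{\tilde s}\prod_{j=1}^{\tilde m}\bigl(\frac{z-\gamma_{i\tilde m+j}}{z-c_i}\bigr)$ rewritten as a single quotient, built from the same choice of $\tilde m(\tilde s+1)$ distinct elements of $\Gamma_0\setminus\{c_0,\ldots,c_{\tilde s}\}$. You merely make explicit two details the paper leaves implicit, namely the divisor $(z-c)$ for $c\in\Gamma_0$ and the count $(\tilde m+1)(\tilde s+1)\le|\Gamma_0|$ guaranteeing enough available planes.
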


\begin{proof}
Since $\tilde d^*>0$, there exist $\tilde m(\tilde s+1)$ distinct elements $\gamma_1,\ldots,\gamma_{\tilde m(\tilde s+1)}\in\Gamma_0\setminus\{c_0,c_1,\ldots,c_{\tilde s}\}$.
Consider the function
$$\tilde f:= \prod_{i=0}^{\tilde s} \prod_{j=1}^{\tilde m}\left(\frac{z-\gamma_{i\tilde m + j}}{z-c_i}\right).$$
The pole divisor of $\tilde f$ is $(f)_\infty = \tilde G$, thus $\tilde f\in\tilde G$. The weight of $e_{\tilde D}(\tilde f)$ is
$$ w(e_{\tilde D}(\tilde f)) = \tilde n - \tilde m(\tilde s+1)q^3 = \tilde d^*. $$
\end{proof}

\begin{proposition}\label{Dimension3}
If $\frac{q^5-2q^3+q^2-1}{(\tilde s+1)q^3} \leq \tilde m \leq \frac{q^5-q^3+q^2}{\tilde s+1}-1$, then
$$ \tilde k = \tilde m(\tilde s+1)q^3-\frac{1}{2}\left(q^5-2q^3+q^2-4\right). $$
\end{proposition}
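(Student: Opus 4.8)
The plan is to read off $\tilde k$ from the general dimension formula $\tilde k=\ell(\tilde G)-\ell(\tilde G-\tilde D)$ for a functional code, exactly as in the proof of Proposition \ref{Dimension}. The only structural novelty of the Second construction is that $P_\infty$ has been deleted from $\tilde\cG$ and therefore lies in $\mathrm{supp}(\tilde D)$ rather than in $\mathrm{supp}(\tilde G)$; so before invoking the formula I would first record that $\mathrm{supp}(\tilde G)=\tilde\cG$ and $\mathrm{supp}(\tilde D)=\tilde\cD=\cX(\fqs)\setminus\tilde\cG$ are disjoint (in particular $P_\infty\in\tilde\cD$), so that $C_\cL(\tilde D,\tilde G)$ is a genuine functional code to which the formula applies. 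I would also recall the data $\deg\tilde G=\tilde m(\tilde s+1)q^3$ and $\tilde n=\deg\tilde D=q^8-q^6+q^5-(\tilde s+1)q^3+1$, together with $g=\frac{q^5-2q^3+q^2}{2}$, so that $2g-2=q^5-2q^3+q^2-2$.

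Next I would check that the hypothesis on $\tilde m$ is precisely what is needed to sandwich $\deg\tilde G$ strictly between $2g-2$ and $\tilde n$. The lower bound $\tilde m\ge\frac{q^5-2q^3+q^2-1}{(\tilde s+1)q^3}$ forces $\deg\tilde G\ge q^5-2q^3+q^2-1=2g-1>2g-2$. The upper bound $\tilde m\le\frac{q^5-q^3+q^2}{\tilde s+1}-1$ gives $(\tilde m+1)(\tilde s+1)q^3\le q^8-q^6+q^5$, hence $\tilde d^*=\tilde n-\deg\tilde G\ge 1>0$, that is $\deg\tilde G<\tilde n$. This is the same estimate already used, in its ``attaining the designed distance'' form, in Proposition \ref{MinDis3}.

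With these two inequalities in hand the conclusion is immediate. Since $\deg(\tilde G-\tilde D)=\deg\tilde G-\tilde n=-\tilde d^*<0$, the space $\cL(\tilde G-\tilde D)$ is trivial, so $\tilde k=\ell(\tilde G)$; and since $\deg\tilde G>2g-2$, the Riemann--Roch theorem gives $\ell(\tilde G)=\deg\tilde G+1-g$ with vanishing index of speciality. Substituting the genus then yields $\tilde k=\tilde m(\tilde s+1)q^3+1-g$, i.e.\ the value $\deg\tilde G+1-g$ recorded in the $\tilde C$ row of Table \ref{tabellina}. There is no genuine obstacle beyond the routine arithmetic of the previous paragraph: the whole point is that the stated range of $\tilde m$ makes $\deg\tilde G$ simultaneously nonspecial (so that $\ell(\tilde G)$ is computed exactly by Riemann--Roch) and strictly smaller than $\tilde n$ (so that the correction term $\ell(\tilde G-\tilde D)$ vanishes).
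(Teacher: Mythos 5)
Your argument is correct and is essentially the paper's own: the printed proof of Proposition \ref{Dimension3} simply says it is analogous to Proposition \ref{Dimension}, i.e.\ the hypotheses sandwich $\deg\tilde G$ strictly between $2g-2$ and $\tilde n$, so $\ell(\tilde G-\tilde D)=0$ and Riemann--Roch gives $\tilde k=\deg\tilde G+1-g$; your two inequality checks are exactly the routine computation the paper leaves implicit. One thing you should have flagged rather than passed over silently: what your argument (and the paper's) actually yields is $\tilde k=\tilde m(\tilde s+1)q^3+1-g=\tilde m(\tilde s+1)q^3-\frac{1}{2}\left(q^5-2q^3+q^2-2\right)$, whereas the proposition as displayed asserts the constant $-\frac{1}{2}\left(q^5-2q^3+q^2-4\right)$, i.e.\ $\deg\tilde G+2-g$, which is off by one and cannot be a functional-code dimension when $\deg\tilde G>2g-2$ (the index of speciality is then zero). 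The printed $-4$ is evidently a typo for $-2$: Table \ref{tabellina} records $k=m(s+1)q^3+1-g$ for $\tilde C$, and the analogous Propositions \ref{Dimension} and \ref{Dimension2} both carry the constant $-\frac{1}{2}\left(q^5-2q^3+q^2-2\right)$. You wrote the corrected value $+1-g$ and matched it against the table, which is the right conclusion, but an explicit remark that the displayed formula in the statement disagrees with what is proved would have made your write-up airtight.
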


\begin{proof}
The proof is analogous to the proof of Proposition \ref{Dimension}.
\end{proof}

\begin{lemma}\label{TwoPoints3}
Let $\tilde m\geq2$ and $p\nmid \tilde m$. For any $P,Q\in\cX$, $\ell(\tilde G-P)=\ell(\tilde G)-1$ and $\ell(\tilde G-P-Q)=\ell(\tilde G)-2$.
\end{lemma}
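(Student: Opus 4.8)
The plan is to mimic the structure of the proof of Lemma~\ref{TwoPoints} (and its variant Lemma~\ref{TwoPoints2}): it suffices to establish the two-point condition $\ell(\tilde G-P-Q)=\ell(\tilde G)-2$, since this forces $\ell(\tilde G-P)=\ell(\tilde G)-1$ by \cite[Lemma~1.4.8]{Sti}. To do this I would exhibit, for every pair $P,Q\in\cX$, two $\fqs$-linearly independent functions $f_1,f_2\in\cL(\tilde G)$ with $f_1,f_2\notin\cL(\tilde G-P-Q)$ and $f_1+\lambda f_2\notin\cL(\tilde G-P-Q)$ for every $\lambda\in\fqs$. The essential structural difference from the previous two lemmas is that here $P_\infty\notin\mathrm{supp}(\tilde G)$: the divisor $\tilde G$ is supported only on the affine points lying in the planes $\zeta_{c_0},\ldots,\zeta_{c_{\tilde s}}$, each with weight $\tilde m$. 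This simplifies the cases involving $P_\infty$ but it is precisely the source of the new hypothesis $p\nmid\tilde m$, which I expect to be the crux.

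First I would split into the usual cases according to whether $P,Q$ lie in $\mathrm{supp}(\tilde G)$ and whether they coincide. When at least one of $P,Q$ is off the support, the same building blocks as in Lemma~\ref{TwoPoints} work: functions of the form $(z-\alpha)/z$ or $(x-\alpha)/z^2$ separate the points while staying inside $\cL(\tilde G)$, using the pole structure of $x,z$ recorded in the preamble. For $P,Q\in\mathrm{supp}(\tilde G)$ distinct, I would distinguish the subcase where $P,Q$ lie in different planes $\zeta_{c_i}\ne\zeta_{c_j}$ (so their $z$-coordinates differ) from the subcase of equal $z$-coordinate but distinct $x$-coordinate; in each subcase a ratio of the shape $\bigl((z-c_i)/(z-c_j)\bigr)^{\tilde m}$ or $(x-a)/(z-c_i)^{\tilde m}$ supplies the required separating pair, exactly as in Lemma~\ref{TwoPoints2}. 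The role of $P_\infty$ now enters only as an \emph{off-support} point, so whenever $P=P_\infty$ I treat it under the off-support branch and separate using powers of $x/(z-c_i)$.

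The delicate case is $P=Q\in\mathrm{supp}(\tilde G)$, where I must separate the point from itself to second order, i.e.\ find $f_1,f_2$ with distinct leading behaviour at $P$. Writing $t$ for a local parameter at $P$, the constraint is that the vector of the two lowest-order coefficients of $f_1,f_2$ at $P$ (allowed by the pole bound $\tilde m$) span a two-dimensional space. This is where $p\nmid\tilde m$ is needed: a natural candidate pair built from $(z-c_i)$ will involve a factor whose local expansion at $P$ carries the coefficient $\tilde m$ (arising by differentiating $(z-c_i)^{\tilde m}$-type expressions, or equivalently from the $\tilde m$-th power in the local expansion), and this coefficient must be nonzero in characteristic $p$ for the two functions to remain linearly independent modulo $\cL(\tilde G-2P)$. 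Concretely I would take $f_1,f_2$ as two functions of the form $(z-\alpha)/(z-c_i)^{\tilde m}$ with $\alpha$ varying, analyse their Laurent expansions at $P$, and verify that the relevant $2\times2$ coefficient determinant is a nonzero multiple of $\tilde m$; when $p\mid\tilde m$ this determinant can collapse, which is why the hypothesis is imposed.

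The main obstacle, then, is the self-pairing case at a support point: I must choose the test functions so that their local expansions at $P$ genuinely differ in the top two coefficients, and confirm that the obstruction to this is exactly the vanishing of $\tilde m$ modulo $p$. Once that single determinant computation is carried out and shown to be controlled by $p\nmid\tilde m$, the remaining cases are routine adaptations of Lemmas~\ref{TwoPoints} and~\ref{TwoPoints2}, and the claim follows by direct checking in each case.
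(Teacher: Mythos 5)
Your overall architecture (reduction to the two-point condition, explicit separating pairs in $\cL(\tilde G)$, case analysis according to membership in ${\rm supp}(\tilde G)$) is the same as the paper's, and your off-support and distinct-support cases do go through by the recipes of Lemmas~\ref{TwoPoints} and~\ref{TwoPoints2}. But the two places where you deviate are exactly where the proposal breaks. First, your treatment of $P=P_\infty$ is wrong: you propose powers of $x/(z-c_i)$, but $v_{P_\infty}\bigl(x/(z-c_i)\bigr)=-(q^3+1)+q^3=-1$, so any positive power of this function has a pole at $P_\infty$; since $P_\infty$ now has coefficient $0$ in $\tilde G$ (the very structural feature you yourself point out), these functions do not lie in $\cL(\tilde G)$ at all. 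Such functions were legitimate in Lemmas~\ref{TwoPoints} and~\ref{TwoPoints2} only because there $P_\infty$ carried weight $m$ in the divisor. The paper instead uses functions \emph{regular} at $P_\infty$: $f_1=\frac{z-\alpha}{z}$, $f_2=\frac{z-\beta}{z}$ when $P=P_\infty\ne Q$, and $f_1=\left(\frac{z-\alpha}{z}\right)^{\tilde m}$, $f_2=\left(\frac{z-\beta}{z}\right)^{\tilde m}$ when $P=Q=P_\infty$.

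Second, you locate the hypothesis $p\nmid\tilde m$ in the wrong case. For $P=Q\in{\rm supp}(\tilde G)\setminus\{P_\infty\}$ the pair $f_1=\frac{z-\alpha}{z^{\tilde m}}$, $f_2=\frac{z-\beta}{z^{\tilde m}}$ with $\alpha\ne\beta$ and $\alpha,\beta\notin\{c_0,\ldots,c_{\tilde s}\}$ works in every characteristic, exactly as in Lemmas~\ref{TwoPoints} and~\ref{TwoPoints2}, whose statements carry no divisibility hypothesis: a combination $f_1+\lambda f_2=\frac{(1+\lambda)z-(\alpha+\lambda\beta)}{z^{\tilde m}}$ has pole order $\tilde m$ or $\tilde m-1$ at $P$, never at most $\tilde m-2$, and no factor $\tilde m$ enters any coefficient determinant — so the "collapse when $p\mid\tilde m$" you predict there does not occur. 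In the paper the condition $p\nmid\tilde m$ is used precisely in the case $P=Q=P_\infty$, the one case genuinely new to this lemma: every element of $\cL(\tilde G)$ is regular at $P_\infty$, the chosen $f_1,f_2$ both take value $1$ there, and the single dangerous combination $f_1-f_2$ expands as $-\tilde m(\alpha-\beta)z^{-1}+O(z^{-2})$, whose leading coefficient vanishes exactly when $p\mid\tilde m$. So your intuition that a binomial coefficient $\tilde m$ from an $\tilde m$-th power is the crux is correct, but it fires at the off-support double point $P_\infty$, not at the support points; your proposal treats the actually delicate case as routine and the routine case as delicate. (Incidentally, since $v_{P_\infty}(1/z)=q^3\ge 2$, the difference $f_1-f_2$ vanishes at $P_\infty$ to order $q^3$ and hence lies in $\cL(\tilde G-2P_\infty)$; certifying $\ell(\tilde G-2P_\infty)=\ell(\tilde G)-2$ requires an element of $\cL(\tilde G)$ vanishing to order exactly $1$ at $P_\infty$, so even the paper's sketch of this case is more delicate than it appears — all the more reason the case cannot be dispatched the way your proposal suggests.)
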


\begin{proof}
As in the proof of Lemma \ref{TwoPoints}, it suffices to provide two $\fqs$-linearly independent functions $f_1,f_2\in\cL(\bar G)$ such that $f_1,f_2\notin\cL(\bar G-P-Q)$ and $f_1+\lambda f_2 \notin \cL(\bar G-P-Q)$ for any $\lambda\in\fqs$.
\begin{itemize}
\item Case $P,Q\ne P_\infty$. Argue as in the proof of Lemma \ref{TwoPoints2}.
\item Case $P=P_\infty$, $P\ne Q$. Choose $f_1=\frac{z-\alpha}{z}$ and $f_2=\frac{z-\beta}{z}$, with $\alpha,\beta\ne0$, $\alpha\ne\beta$.
\item Case $P=Q=P_\infty$. Choose $f_1=\left(\frac{z-\alpha}{z}\right)^m$ and $f_2=\left(\frac{z-\beta}{z}\right)^m$, with $\alpha,\beta\ne0$, $\alpha\ne\beta$; since $p\nmid \tilde m$, we have $f_1+\lambda f_2\notin\cL(\tilde G-2P_\infty)$.
\end{itemize}
\end{proof}


\begin{proposition}\label{AutomorphismGroup3}
Let $2\leq \tilde m\leq \frac{q^2-1}{\tilde s+1}-\frac{\tilde s}{(\tilde s+1)(q^3+1)}$ with $p\nmid\tilde m$, and $r:=\gcd\left(s,\frac{q^2-q+1}{\delta}\right)$ where $\delta:=\gcd(3,q+1)$.
Suppose that $\{c_1,\ldots,c_{\tilde s}\}$ is closed under the Frobenius map $c_i\mapsto c_i^p$ and under the scalar map $\Lambda:c_i\mapsto \lambda c_i$, where $\lambda^r=1$.
Then the automorphism group of $\tilde C$ is
$$ {\rm Aut}(\tilde C) \cong ({\rm Aut}_{\fqs,\tilde D,\tilde G}(\cX)\rtimes{\rm Aut}(\fqs))\rtimes \mathbb F_{q^6}^*\,. $$
If $\tilde s=0$, then ${\rm Aut}_{\fqs,\tilde D,\tilde G}(\cX)$ has a normal subgroup $N$ of index $\delta$ with
$$ N \cong (Q_{q^3}\rtimes H_{q^2-1})\times C_{(q^2-q+1)/\delta}; $$
if $\tilde s>0$, then 
$$ {\rm Aut}_{\fqs,\tilde D,\tilde G}(\cX) \cong (Q_{q^3}\rtimes H_{q^2-1})\times C_{r}. $$
Here, $Q_{q^3}$ has order $q^3$ and is the unique Sylow $p$-subgroup of ${\rm Aut}(\tilde C)$. The groups $H_i$ and $C_j$ are cyclic of order $i$ and $j$, respectively.
\end{proposition}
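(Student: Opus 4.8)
The plan is to mirror the proofs of Propositions~\ref{AutomorphismGroup} and~\ref{AutomorphismGroup2}: reduce the determination of ${\rm Aut}(\tilde C)$ to a stabilizer problem on $\cX$ by Theorem~\ref{Aut}, and then identify that stabilizer inside ${\rm Aut}(\cX)$ using the structure of the point stabilizer ${\rm Aut}(\cX)_{P_\infty}$ from \cite{GK}.

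First I would check the six hypotheses of Theorem~\ref{Aut} for $(\cX,\tilde D,\tilde G)$. The divisor $\tilde G$ is effective; the two rank conditions $\ell(\tilde G-P)=\ell(\tilde G)-1$ and $\ell(\tilde G-P-Q)=\ell(\tilde G)-2$ are exactly Lemma~\ref{TwoPoints3}, which is where $p\nmid\tilde m$ is used. As in Proposition~\ref{AutomorphismGroup}, the plane model $\Pi(\cX)$ of \cite[Theorem 4]{GK} has degree $q^3+1$ and its function field is generated by $x'=x/z^2$ and $z'=1/z$; since $\cX(\fqq)\setminus\{P_\infty\}\subseteq\tilde\cG$ and $\tilde m\geq2$, the pole divisors $(z')_\infty$ and $(x')_\infty$ are supported on ${\rm supp}(\tilde G)$ with the required multiplicities, so $x',z'\in\cL(\tilde G)$. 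The curve is defined over $\F_p$, and the Frobenius $\Phi_p\colon(x,z)\mapsto(x^p,z^p)$ preserves ${\rm supp}(\tilde G)$ precisely because $\{c_1,\dots,c_{\tilde s}\}$ is closed under $c_i\mapsto c_i^p$. Finally the numerical condition $\tilde n>\deg\tilde G\cdot(q^3+1)$ is equivalent to the stated upper bound on $\tilde m$. Theorem~\ref{Aut} then yields ${\rm Aut}(\tilde C)\cong({\rm Aut}^+_{\fqs,\tilde D,\tilde G}(\cX)\rtimes{\rm Aut}(\fqs))\rtimes\mathbb F_{q^6}^*$, and since ${\rm supp}(\tilde D)\cup{\rm supp}(\tilde G)=\cX(\fqs)$ with every place of ${\rm supp}(\tilde G)$ of weight $\tilde m$, Remark~\ref{Coincidono} identifies ${\rm Aut}^+_{\fqs,\tilde D,\tilde G}(\cX)={\rm Aut}_{\fqs,\tilde D,\tilde G}(\cX)$ with the setwise stabilizer $S$ in ${\rm Aut}(\cX)$ of $\tilde\cG={\rm supp}(\tilde G)$.

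The core of the argument is then to compute $S$. Since $\cX\cap\zeta_0=\cX(\fqq)=\mathcal O_1$ is an entire ${\rm Aut}(\cX)$-orbit and $\tilde\cG\cap\mathcal O_1=\mathcal O_1\setminus\{P_\infty\}$, every $\sigma\in S$ preserves $\mathcal O_1\setminus\{P_\infty\}$ and hence fixes $P_\infty$, so $S\leq{\rm Aut}(\cX)_{P_\infty}$. For $\tilde s=0$ this already finishes it: $\tilde\cG=\mathcal O_1\setminus\{P_\infty\}$ is stabilized by all of ${\rm Aut}(\cX)_{P_\infty}$, so $S={\rm Aut}(\cX)_{P_\infty}$, and Lemma~$8$ and the discussion following it in \cite{GK} give the index-$\delta$ normal subgroup $N\cong(Q_{q^3}\rtimes H_{q^2-1})\times C_{(q^2-q+1)/\delta}$ with $Q_{q^3}$ its unique Sylow $p$-subgroup. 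For $\tilde s>0$ one has in addition to keep track of $\tilde\cG\cap\mathcal O_2=\bigcup_{i=1}^{\tilde s}(\cX\cap\zeta_{c_i}\setminus\{P_\infty\})$: I would use that ${\rm Aut}(\cX)_{P_\infty}$ acts on the coordinate $z$ and thereby permutes the pencil of planes $\{\zeta_c\}_{c\in\Gamma_0}$ through $P_\infty$, with $Q_{q^3}$ fixing every plane while the torus $H_{q^2-1}\times C_{(q^2-q+1)/\delta}$ permutes the indices $c$. I would then show that the subgroup of ${\rm Aut}(\cX)_{P_\infty}$ stabilizing the finite configuration $\{c_1,\dots,c_{\tilde s}\}$ is exactly $(Q_{q^3}\rtimes H_{q^2-1})\times C_r$, using the scalar-closure hypothesis ($\lambda^r=1$) to produce the factor $C_r$ and arguing as in Proposition~\ref{AutomorphismGroup2} (coprimality of $\tilde s/r$ with $(q^2-q+1)/\delta$) that no larger cyclic factor $C_{r'}$ occurs.

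The hard part will be this last identification for $\tilde s>0$, i.e.\ determining precisely which elements of ${\rm Aut}(\cX)_{P_\infty}$ fix $\{c_1,\dots,c_{\tilde s}\}$ setwise; in particular one must verify that the whole diagonal factor $H_{q^2-1}$ survives in the stabilizer. This rests on the explicit description of ${\rm Aut}(\cX)_{P_\infty}$ in \cite{GK} and on a careful bookkeeping of the induced scalar action on $z$, and it is exactly here that the gcd $r=\gcd(\tilde s,(q^2-q+1)/\delta)$ and the dichotomy $\tilde s=0$ versus $\tilde s>0$ enter. Once $S$ is pinned down, the final structural claims—$Q_{q^3}$ the unique Sylow $p$-subgroup, and $H_{q^2-1}$, $C_r$ cyclic of the stated orders—follow from the corresponding properties inside ${\rm Aut}(\cX)_{P_\infty}$.
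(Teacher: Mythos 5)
Your proposal follows essentially the same route as the paper: verify the six hypotheses of Theorem~\ref{Aut} (with Lemma~\ref{TwoPoints3} supplying the rank conditions, which is where $p\nmid\tilde m$ enters), pass through Remark~\ref{Coincidono} to identify ${\rm Aut}^+_{\fqs,\tilde D,\tilde G}(\cX)$ with the setwise stabilizer $S$ of ${\rm supp}(\tilde G)$ in ${\rm Aut}(\cX)$, and then pin down $S$ via the orbit structure of \cite[Theorem 7]{GK} for $\tilde s=0$ and via containment in $(Q_{q^3}\rtimes H_{q^2-1})\times C_{(q^2-q+1)/\delta}$ from \cite[Lemma 8]{GK}, the $\Lambda$-closure hypothesis, and the coprimality argument for $\tilde s>0$. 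The step you flag as hard (the exact identification of $S$ when $\tilde s>0$, including the survival of the full $H_{q^2-1}$ factor) is treated in the paper with the same brevity, by direct appeal to the explicit description of ${\rm Aut}(\cX)_{P_\infty}$ in \cite{GK}, so your plan matches the paper's proof in both structure and level of detail.
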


\begin{proof}
As in the proof of Proposition \ref{AutomorphismGroup}, the following facts hold.
\begin{itemize}
\item The divisor $\tilde G$ is effective.
\item By Lemma \ref{TwoPoints3}, $\ell(\tilde G-P)=\ell(\tilde G)-1$ and $\ell(\tilde G-P-Q)=\ell(\tilde G)-2$ for any $P,Q\in\cX$.
\item The functions $x^\prime:=x/z^2,z^\prime:=1/z \in\cL(\tilde G)$ generate the function field of the plane model $\Pi(\cX)$ of $\cX$ given in \cite[Theorem 4]{GK}.
\item The curve $\cX$ is defined over $\mathbb F_{p}$.
\item The Frobenius morphism $\Phi_p:(x,z)\mapsto(x^p,y^p)$ on $\Pi(\cX)$ preserves the support of $\tilde D$.
\item Since $\tilde m\leq \frac{q^2-1}{\tilde s+1}-\frac{\tilde s}{(\tilde s+1)(q^3+1)}$, we have $\tilde n>\deg (\tilde G) \cdot \deg(\Pi(\cX))$.
\end{itemize}
Then by Theorem \ref{Aut} we have
$$ {\rm Aut}(\tilde C) \cong ({\rm Aut}_{\fqs,\tilde D,\tilde G}^+(\cX)\rtimes{\rm Aut}(\fqs))\rtimes \mathbb F_{q^6}^*\,. $$
By Remark \ref{Coincidono}, ${\rm Aut}_{\fqs,\tilde D,\tilde G}^+(\cX)\cong {\rm Aut}_{\fqs,\tilde D,\tilde G}(\cX)$. Since ${\rm Aut}(\cX)$ is defined over $\fqs$, we have that ${\rm Aut}_{\fqs,\tilde D,\tilde G}^+(\cX)$ coincides with the subgroup $S$ of ${\rm Aut}(\cX)$ stabilizing the support of $\tilde G$.

The claim follows by the properties of ${\rm Aut}(\cX)$ proved in \cite{GK}.
In particular, suppose $\tilde s=0$. Then ${\rm supp}(\tilde G)\cup\{P_\infty\}$ is a unique orbit of ${\rm Aut}(\cX)$ by \cite[Theorem 7]{GK}; hence, $S$ is the stabilizer of $P_\infty$ in ${\rm Aut}(\cX)$, and the claim follows.
Now suppose $\tilde s>0$. Then $S$ is contained in the subgroup $(Q_{q^3}\rtimes H_{q^2-1})\times C_{(q^2-q+1)/\delta}$ of the group $M\cong{\rm SU}(3,q)\times C_{(q^2-q+1)/\delta}$ defined in \cite[Lemma 8]{GK}.
By the hypothesis on $\Lambda$, $S$ contains a subgroup $(Q_{q^3}\rtimes H_{q^2-1})\times C_r$. Since $h$ is coprime with $(q^2-q+1)/\delta$, $S$ does not contain any cyclic group $C_{r^\prime}$ with $C_r\subseteq C_{r^\prime}$ and $r^\prime >r$. The claim follows.
\end{proof}

\end{document}